\setlist[enumerate,2]{
  ref=\alph*,
}
\numberwithin{equation}{section}
\theoremstyle{plain}
\newtheorem{thm}[equation]{Theorem}
\newtheorem{prop}[equation]{Proposition}
\newtheorem{lem}[equation]{Lemma}
\newtheorem{conj}[equation]{Conjecture}
\newtheorem*{claim*}{Claim}
\theoremstyle{definition}
\newtheorem{defn}[equation]{Definition}
\newtheorem{setup}[equation]{Setup}
\theoremstyle{remark}
\newtheorem{rem}[equation]{Remark}
\newcommand{\delb}{\sqrt{-1}\partial\bar{\partial}}
\author[R.\ Murakami]{Rei Murakami}
\address{Mathematical Institute, Tohoku University, 6-3, Aramaki Aza-Aoba, Aoba-ku, Sendai 980-8578, Japan}
\email{rei.murakami.p3@dc.tohoku.ac.jp, reimurakami66@gmail.com}
\begin{document}

\title[Numerical criteria on the complex Hessian quotient equations]{Numerical criteria on the complex Hessian quotient equations with the Calabi symmetry}

\begin{abstract}
    Assuming Calabi symmetry, we prove that a numerical condition ensures the solvability of the complex Hessian quotient equation, as conjectured by Sz{\'e}kelyhidi. We also propose a conjecture on the existence of a $k$-subharmonic representative in a given cohomology class and confirm it under the assumption of Calabi symmetry or when the class is semiample.
\end{abstract}

\maketitle

\section{Introduction}
This paper studies a variant of partial positivity of cohomology classes, originating from the theory of complex Hessian quotient equations. 
Let $(X,\omega)$ be an $n$-dimensional compact K{\"a}hler manifold and $\alpha$ a closed real $(1,1)$-form on $X$.
For two integers $k$ and $\ell$ satisfying $n\ge k > \ell\ge0$, we consider the complex Hessian quotient equation which is defined by the following PDE for an unknown function $\varphi$:
\begin{equation}\label{eq: cHe}
    \alpha_\varphi^k\wedge\omega^{n-k}=e^H \, \alpha_\varphi^\ell\wedge\omega^{n-\ell}.
\end{equation}
Here $H$ is a smooth function satisfying the condition $\int_X\alpha^k\wedge\omega^{n-k}=\int_X e^H \alpha^\ell\wedge\omega^{n-\ell}$ and $\alpha_\varphi$ denotes the form $\alpha+\delb\varphi$. The study of complex Hessian equations originated in the paper \cite{CNS}, which addressed Dirichlet problems on real domains. In complex geometry, several examples of \eqref{eq: cHe} have appeared, playing important roles in various studies, such as the complex Monge-Amp{\`e}re equation ($k=n$ and $\ell=0$) and the $J$-equation ($k=n$ and $\ell=n-1$).
 
In \cite{Sze}, Sz{\'e}kelyhidi estimated the $C^2$-norm of solutions of \eqref{eq: cHe} by introducing the notion of $\mathcal{C}$-subsolutions, and then applied the continuity method to prove the following result:
\begin{thm}[{\cite[Propositions 21 and 22]{Sze}}]\label{thm: sze}
    Let $(X,\omega)$ be an $n$-dimensional compact K{\"a}hler manifold and $\alpha$ a closed real $(1,1)$-form on $X$. Assume that there exists a strictly $\alpha$-$(\omega,k)$-subharmonic function \textup{(}see the definition below\textup{)}. 
    \begin{enumerate}[font=\normalfont]
        \item\label{item: cHe} Suppose that $\ell=0$. Then there exists a strictly $\alpha$-$(\omega,k)$-subharmonic solution $\varphi$ of \eqref{eq: cHe}.
        \item\label{item: sze} Suppose that $\ell\ge1$ and that $e^H$ is a constant $c$. Then the following conditions are equivalent:
        \begin{enumerate}[font=\normalfont]
        \item\label{item: csub} There exists a strictly $\alpha$-$(\omega,k)$-subharmonic function $\varphi$ such that 
        $$k\alpha_\varphi^{k-1}\wedge\omega^{n-k}-c\ell\alpha_\varphi^{\ell-1}\wedge\omega^{n-\ell}>0$$ 
        as an $(n-1,n-1)$-form. 
        \item There exists a strictly $\alpha$-$(\omega,k)$-subharmonic solution $\varphi$ of \eqref{eq: cHe}.
        \end{enumerate}
        \end{enumerate}
\end{thm}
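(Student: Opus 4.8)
The plan is to reduce both parts to Sz\'ekelyhidi's general existence theorem for fully nonlinear equations of the form $F(\lambda(\alpha_\varphi))=\psi$, where $\lambda(\alpha_\varphi)=(\lambda_1,\dots,\lambda_n)$ denotes the eigenvalues of $\alpha_\varphi$ relative to $\omega$. Diagonalizing $\alpha_\varphi$ against $\omega$ at a point gives $\alpha_\varphi^k\wedge\omega^{n-k}=\binom{n}{k}^{-1}\sigma_k(\lambda)\,\omega^n$ and similarly for $\ell$, so \eqref{eq: cHe} becomes $\sigma_k(\lambda)=\binom{n}{k}e^H$ when $\ell=0$, and $\sigma_k(\lambda)/\sigma_\ell(\lambda)=\binom{n}{k}\binom{n}{\ell}^{-1}e^H=:G$ when $\ell\ge1$. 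I would take $F=(\sigma_k/\sigma_\ell)^{1/(k-\ell)}$ (resp.\ $\sigma_k^{1/k}$), defined on the G\aa rding cone $\Gamma_k=\{\sigma_1>0,\dots,\sigma_k>0\}$, on which it is smooth, symmetric, increasing in each $\lambda_i$, and concave; these are exactly the structural hypotheses of the framework in \cite{Sze}.

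The conceptual heart is to identify the numerical conditions with the $\mathcal{C}$-subsolution property. Recall $\underline\varphi$ is a $\mathcal{C}$-subsolution if, at every point, its eigenvalue vector $\mu$ satisfies $\lim_{t\to+\infty}F(\mu+te_i)>\psi$ for each $i$. I would compute this limit directly: for $F=(\sigma_k/\sigma_\ell)^{1/(k-\ell)}$ it equals $\bigl(\sigma_{k-1}(\mu|i)/\sigma_{\ell-1}(\mu|i)\bigr)^{1/(k-\ell)}$, where $\mu|i$ omits the $i$-th entry. Extracting the eigenvalue of the $(n-1,n-1)$-form $k\alpha_{\underline\varphi}^{k-1}\wedge\omega^{n-k}-c\ell\,\alpha_{\underline\varphi}^{\ell-1}\wedge\omega^{n-\ell}$ in the direction $i$ gives, up to a universal positive constant, $k!(n-k)!\,\sigma_{k-1}(\mu|i)-c\,\ell!(n-\ell)!\,\sigma_{\ell-1}(\mu|i)$; since $\ell!(n-\ell)!/\bigl(k!(n-k)!\bigr)=\binom{n}{k}/\binom{n}{\ell}$, positivity of this form is \emph{precisely} $\sigma_{k-1}(\mu|i)/\sigma_{\ell-1}(\mu|i)>G$ for all $i$, i.e.\ the $\mathcal{C}$-subsolution inequalities. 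Thus condition \ref{item: csub} is equivalent to "$\varphi$ is a strictly $k$-subharmonic $\mathcal{C}$-subsolution." For $\ell=0$ the limit is $\lim_{t\to\infty}\sigma_k(\mu+te_i)^{1/k}=+\infty$ for any $\mu\in\Gamma_k$ (as $\sigma_{k-1}(\mu|i)>0$ there), so \emph{every} strictly $k$-subharmonic function is automatically a $\mathcal{C}$-subsolution, which is why part \ref{item: cHe} needs no extra hypothesis.

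With a $\mathcal{C}$-subsolution available, I would invoke the a priori estimates of \cite{Sze}: the subsolution yields uniform $C^0$, $C^1$, and the decisive $C^2$ bounds, after which Evans--Krylov and Schauder bootstrapping give $C^\infty$ control with $\lambda(\alpha_\varphi)$ trapped in a compact subset of $\Gamma_k$; the continuity method then produces a strictly $k$-subharmonic solution. This settles part \ref{item: cHe} and the implication \ref{item: csub}$\Rightarrow$(b). For the reverse implication in part \ref{item: sze}, I would show that any strictly $k$-subharmonic solution is itself a $\mathcal{C}$-subsolution: writing $\nu=\lambda|i$ and using $\sigma_m(\lambda)=\lambda_i\sigma_{m-1}(\nu)+\sigma_m(\nu)$, the required inequality $\sigma_{k-1}(\nu)/\sigma_{\ell-1}(\nu)>\sigma_k(\lambda)/\sigma_\ell(\lambda)$ reduces to $\sigma_k(\nu)\sigma_{\ell-1}(\nu)<\sigma_{k-1}(\nu)\sigma_\ell(\nu)$, that is, to the strict monotonicity of $m\mapsto\sigma_m(\nu)/\sigma_{m-1}(\nu)$. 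This holds on $\Gamma_k$ whenever $k>\ell$; crucially, the binomial factors force strictness even when the entries of $\nu$ coincide, so a genuine solution always satisfies \ref{item: csub}.

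I expect the main obstacle to be the second-order a priori estimate underlying the existence theorem: bounding $\sup_X|\delb\varphi|$ while keeping $\lambda(\alpha_\varphi)$ uniformly away from $\partial\Gamma_k$. This is exactly where the $\mathcal{C}$-subsolution hypothesis is genuinely used, to dominate the troublesome third-order terms via the concavity of $F$ together with a carefully chosen test function; the $C^0$ estimate (an Alexandrov--Bakelman--Pucci argument on the set where $\varphi-\underline\varphi$ is nearly minimal) and the blow-up gradient estimate are also substantial but follow the same scheme. By contrast, the purely algebraic translation of the second paragraph, converting the form-positivity in \ref{item: csub} into scalar $\mathcal{C}$-subsolution inequalities, is elementary and is what connects Sz\'ekelyhidi's numerical criterion to the PDE machinery.
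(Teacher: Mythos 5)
The paper offers no proof of this theorem---it is quoted directly from S{\'e}kelyhidi's work \cite[Propositions 21 and 22]{Sze}---and your proposal is a correct reconstruction of exactly that cited argument: the pointwise translation of \eqref{eq: cHe} into $\sigma_k/\sigma_\ell$ on $\Gamma_k$, the identification of condition \eqref{item: csub} with the $\mathcal{C}$-subsolution inequalities $\sigma_{k-1}(\mu|i)/\sigma_{\ell-1}(\mu|i)>G$ (with the binomial bookkeeping done correctly), the observation that for $\ell=0$ the limit is $+\infty$ so every strictly $(\omega,k)$-subharmonic function is automatically a $\mathcal{C}$-subsolution, existence via the a priori estimates and continuity method, and the converse via the strict monotonicity $\sigma_{k-1}(\nu)\sigma_\ell(\nu)>\sigma_k(\nu)\sigma_{\ell-1}(\nu)$ on $\Gamma_{k-1}$, where your remark that the binomial normalizations force strictness even at points with equal entries is the right one. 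Your route is essentially the same as the intended (cited) proof, and I see no gap beyond the standard deferral of the $C^0$--$C^2$ estimates to \cite{Sze}.
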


\begin{defn}[$\alpha$-$(\omega,k)$-subharmonicity]
    Let $(X,\omega)$ be an $n$-dimensional compact K{\"a}hler manifold and $\alpha$ a closed real $(1,1)$-form on $X$. A smooth function $\varphi$ is said to be \textit{strictly $\alpha$-$(\omega,k)$-subharmonic} if $\alpha_\varphi^i\wedge\omega^{n-i}>0$ for any $0\le i \le k$.
\end{defn}

When $k=n$ and $\ell=0$, Theorem \ref{thm: sze} corresponds to Yau's theorem \cite{Yau}.
The case $\ell=0$ (known as the complex $k$-Hessian equation) and the case $k=n$ were proved by \cite{DK} and \cite{SW, FLM}, respectively. 

It is natural to ask when there exists a strictly $\alpha$-$(\omega,k)$-subharmonic function and when condition \eqref{item: csub} in Theorem \ref{thm: sze} holds. 
Motivated by a conjecture on the $J$-equation proposed in \cite{LS} (later proved in \cite{GChen, Datar-Pingali, Song}), Sz{\'e}kelyhidi formulated a conjecture \cite[Conjecture 23]{Sze} (Conjecture \ref{conj} \eqref{item: conj2} below) to characterize condition \eqref{item: csub} in Theorem \ref{thm: sze}.
To state a conjecture regarding the existence of a strictly $\alpha$-$(\omega,k)$-subharmonic function, we introduce the following notations:
\begin{defn}
    Let $(X,\omega)$ be an $n$-dimensional compact K{\"a}hler manifold and $k$ an integer with $0< k\le n$. The sets $\mathcal{K}_{k,\omega}$ and $\mathcal{P}_{k,\omega}$ are defined by
    \begin{align*} 
    \mathcal{K}_{k,\omega}&=\left\{[\alpha]\in {H}^{1,1}(X,\mathbb{R})\ \middle\vert \text{ there exists a strictly $\alpha$-$(\omega,k)$-subharmonic function}
    \right\},\\
    \mathcal{P}_{k,\omega}&=\left\{[\beta]\in H^{1,1}(X,\mathbb{R})\ \middle\vert \begin{array}{l}
              \int_V \beta^{k-n+p}\wedge\omega^{n-k}>0 
              \text{ for any $p$-dimensional} 
              \\
              \text{subvariety $V$ of $X$ with $p\ge n-k+1$}
        \end{array}\right\}.
    \end{align*}
\end{defn}

\begin{conj}\label{conj}
    Let $(X,\omega)$ be an $n$-dimensional compact K{\"a}hler manifold and $\alpha$ a closed real $(1,1)$-form.
    \begin{enumerate}[font=\normalfont]
        \item\label{item: conj} $[\alpha]$ is in $\mathcal{K}_{k,\omega}$ if and only if there exists a path in $\mathcal{P}_{k,\omega}$ passing through $[\alpha]$ and $\mathcal{K}_{k,\omega}$.
        \item\label{item: conj2} Suppose that $[\alpha]\in \mathcal{K}_{k,\omega}$. Then, there exists a strictly $\alpha$-$(\omega,k)$-subharmonic function that satisfies $k \alpha_\varphi^{k-1}\wedge\omega^{n-k}-c\ell\alpha_\varphi^{\ell-1}\wedge\omega^{n-\ell}>0$ if and only if for any $p$-dimensional subvariety $V$ with $p\ge n-\ell$, we have
        \begin{align*}
            \int_V\frac{k!}{(k-n+p)!}\alpha^{k-n+p}\wedge\omega^{n-k}-c\, \frac{\ell!}{(\ell-n+p)!}\alpha^{\ell-n+p}\wedge\omega^{n-\ell}>0.
        \end{align*}
    \end{enumerate}
\end{conj}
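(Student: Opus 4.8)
The plan is to prove the two equivalences~\eqref{item: conj} and~\eqref{item: conj2} by separating necessity from sufficiency; combined with Theorem~\ref{thm: sze}\,\eqref{item: sze}, the equivalence in~\eqref{item: conj2} immediately upgrades to a numerical criterion for the solvability of~\eqref{eq: cHe}, which is the stated goal. I expect necessity to hold in general, while sufficiency will require the symmetry or semiampleness hypotheses promised in the abstract. I treat the harder part~\eqref{item: conj2} first and then indicate how~\eqref{item: conj} follows from the same ideas.

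For necessity in~\eqref{item: conj2}, suppose a strictly $\alpha$-$(\omega,k)$-subharmonic $\varphi$ with $\Theta:=k\alpha_\varphi^{k-1}\wedge\omega^{n-k}-c\ell\alpha_\varphi^{\ell-1}\wedge\omega^{n-\ell}>0$ exists. The case $p=n-1$ is direct: for a hypersurface $V$ the positive $(n-1,n-1)$-form $\Theta$ restricts to a positive top form, so $\int_V\Theta>0$, and since $\alpha_\varphi$ is cohomologous to $\alpha$ this is exactly the asserted inequality (the coefficients reduce to $k$ and $\ell$). For $p<n-1$ I would restrict to a resolution $\widetilde V$ of $V$; here all powers $\alpha^{k-n+p}$, $\alpha^{\ell-n+p}$ are nonnegative since $p\ge n-\ell>n-k$, so the integrand $\tfrac{k!}{(k-n+p)!}\alpha_\varphi^{k-n+p}\wedge\omega^{n-k}-c\tfrac{\ell!}{(\ell-n+p)!}\alpha_\varphi^{\ell-n+p}\wedge\omega^{n-\ell}$ is a genuine $(p,p)$-form on $\widetilde V$, and I would propagate the positivity of $\Theta$ down to this $(p,p)$-form by a pointwise eigenvalue (Newton--Maclaurin, G\aa rding-cone) argument before integrating.

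The sufficiency direction is the crux, and this is where I would invoke Calabi symmetry. Under that hypothesis every invariant closed $(1,1)$-form---in particular $\alpha$, $\omega$, and any invariant $\alpha_\varphi$---is encoded by a profile function of a single moment-map variable $x\in[a,b]$, with the endpoints $x=a,b$ corresponding to the degenerate fibers (the zero and infinity sections). The invariant subvarieties are assembled from these sections, the fibers, and intermediate strata, so the a priori infinite family of numerical inequalities collapses to a finite list: the interior tests become pointwise conditions on the profile, while the extremal tests (the sections and all of $X$) become boundary integrals of the profile over $[a,b]$. In parallel, the existence of an invariant subsolution becomes the solvability of an ODE inequality for the profile of $\varphi$ with endpoint behavior dictated by smooth extension across the degenerate fibers. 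I would then construct such a profile explicitly by integrating the ODE and verify that it stays in the subharmonic cone ($\alpha_\varphi^i\wedge\omega^{n-i}>0$ for all $i\le k$) while realizing $\Theta>0$.

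The main obstacle is matching the endpoint data: the constructed profile must extend smoothly across $x=a$ and $x=b$, and these two constraints are precisely what the extremal numerical inequalities encode. Concretely I expect solvability to reduce to the positivity of an integral $\int_a^b G(x)\,dx$ whose endpoint contributions are the section integrals; the interior pointwise inequalities should let one choose the free part of the profile so that $G>0$ throughout, while the two endpoint inequalities close up the integral constraints---this equivalence between a finite set of intersection numbers and the integrability of the ODE is the technical heart. Once~\eqref{item: conj2} is settled, part~\eqref{item: conj} follows because under Calabi symmetry $\mathcal{K}_{k,\omega}$ and $\mathcal{P}_{k,\omega}$ become explicit convex regions in the finite-dimensional space of invariant classes, so after verifying the containment $\mathcal{K}_{k,\omega}\subseteq\mathcal{P}_{k,\omega}$ (the necessity, as above) the path criterion becomes a connectivity statement provable by an openness--closedness argument along the path. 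For the semiample case I would instead exploit the morphism $f\colon X\to Y$ contracting the degenerate directions: positivity pulls back from an ample class on $Y$, the surviving factor $\omega^{n-k}$ supplies positivity transverse to the fibers of $f$, and the numerical criterion need only be tested on subvarieties detecting the contraction, which is accessible by approximating the semiample class by ample ones.
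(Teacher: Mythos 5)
Your overall division (necessity in general, sufficiency under Calabi symmetry or semiampleness) matches the paper's architecture, but the sufficiency plan --- which you correctly identify as the crux --- misses the mechanisms that make the paper's proof of Theorem \ref{thm: main} work, and the substitutes you propose would fail. First, there is no ``free part of the profile'' to choose: the equation \eqref{eq: ODE} is a first-order ODE that, by Fang--Lai's observation, integrates exactly to the \emph{algebraic} equation $G^{m,n,k}(x,y)=\mu\, G^{m,n,\ell}(x,y)$ (i.e.\ \eqref{eq: poly}), so the profile $y(x)$ is an implicitly defined branch of a polynomial equation with no freedom whatsoever; solvability is not ``positivity of an integral $\int_a^b G(x)\,dx$'' but a root-tracking problem. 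Second, the genuine technical heart, which your sketch does not identify at all, is the degenerate endpoint $x=0$ (the zero section $P_0$), where the eigenvalue $y/x$ is of $0/0$ type and the implicit function theorem fails naively. The paper resolves this by an induction on the family of reduced equations $F_i=G^{m-i,n,k-i}-\mu G^{m-i,n,\ell-i}=0$ (Lemmas \ref{lem: ini} and \ref{lem: ind}), combining Newton's inequality, the strict monotonicity of $\sigma_k/\sigma_\ell$ in $\Gamma_k$, and a convexity estimate $D_y^2F_{i-1}>0$ to produce a branch that is smooth up to $x=0$ and stays in $\Gamma_k$; the numerical hypothesis at $P_0$ enters precisely here, via $D_yF_m(0,0)>0$. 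No ``openness--closedness along the path'' argument produces this smooth extension across the degeneracy. Third, the boundary condition $y(b)=q$ is not an integral constraint: the consistency $F(b,q)=0$ is automatic and purely topological (Proposition \ref{prop: boundary}), and the numerical conditions at $D_\infty$ (and, when $\ell=0$, at the classes cut by $\eta$) are used to show $D_y^sF(b,y)$ --- which equals intersection numbers over $D_\infty^s$ up to a constant --- is positive for $y\ge q$, forcing the branch from Proposition \ref{prop: interval} to terminate at $q$ by monotonicity. Relatedly, your claim that interior tests ``become pointwise conditions'' collapsing an infinite family is not how the proof runs: the only subvarieties that can obstruct are $P_0$ and $D_\infty$, and the interior extension needs no numerical input.

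For the semiample case your suggestion to approximate by ample classes and test subvarieties ``detecting the contraction'' does not engage with the actual difficulty: $\Phi^*\omega_Y$ degenerates along the locus $B$ where fiber dimensions jump, and an ample approximation gives no control of the strict $\Gamma_k$ condition in the limit. The paper instead follows Matsumura's construction: it stratifies $B$ by analytic sets $D_s$ on which $\alpha$ retains the needed number of positive eigenvalues, and adds an explicit weighted sum $\sum_a \varepsilon^{\sum_b k^b}\varphi_{\dim B-a}$ of $C^2$ functions whose Levi forms are positive in the normal directions of each stratum, verifying via elementary symmetric function estimates that the perturbed eigenvalue vector lies in $\Gamma_k$. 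Finally, a smaller point: your necessity argument for $p<n-1$ (``propagate the positivity of $\Theta$ down by a pointwise eigenvalue argument on a resolution'') is glossed --- restriction to $V$ does not directly control the relevant mixed terms, and this direction requires the more careful arguments known from the $J$-equation literature; but since the statement is a conjecture and the paper's contribution is the sufficiency direction in the two special cases, the decisive gaps are the ones above.
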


\begin{rem}\mbox{}
    \begin{enumerate}[label=(\alph*)]
        \item Conjecture \eqref{item: conj} is a generalization of the case $k=n$ studied in \cite{DP} (the Nakai-Moishezon criterion when $X$ is projective).
        \item  A natural path to consider in Conjecture \eqref{item: conj} is $[\alpha]+t[\omega]$. If the conjecture is true, testing on this path and finding some path as stated in the conjecture should be equivalent.
        \item  When $X$ is projective, Conjecture \eqref{item: conj} suggests that $\mathcal{K}_{k,\omega}=\mathcal{P}_{k,\omega}$ by testing a path of the form $[\alpha]+t c_1(L)$, where $L$ is an ample line bundle.
        \item Suppose that $X$ is projective, $k=2$, and $l=1$. Since the equation \eqref{eq: cHe} can be rewritten as
        \begin{equation*}
            \left(\alpha_\varphi-\frac{c}{2}\omega\right)^2\wedge\omega^{n-2}=\frac{c^2}{4}\omega^n,
        \end{equation*}
        Conjecture \eqref{item: conj2} follows from Conjecture \eqref{item: conj}. 
    \end{enumerate}
\end{rem}

In this paper, we confirm the conjecture in some cases using the Calabi ansatz, widely known in complex differential geometry since the work \cite{Cal}.
We freely use the following notations:
\begin{setup}\label{setup}
    Let $X:=\mathbb{P}(\mathcal{O}_M\oplus L^{\oplus (m+1)})\rightarrow M$ be the projective bundle over an $n$-dimensional compact K{\"a}hler manifold $M$, where $L$ is a negative line bundle over $M$. Define the subvariety $P_0$ and the divisor $D_\infty$ by $$P_0:=\mathbb{P}(\mathcal{O}_M)\subset X \text{ and } D_\infty:=\mathbb{P}(L^{\oplus(m+1)})\subset X$$ and define the K{\"a}hler form $\omega_M$ on $M$ by $$\omega_M:=-F_h=\sqrt{-1}\partial\bar{\partial}\log h,$$ where $F_h$ is the curvature of a Hermitian metric $h$ on $L$. Let $\omega\in[\pi^*\omega_M]+b\eta$ be a K{\"a}her form on $X$ which can be written as $\pi^*\omega_M+\sqrt{-1}\partial\bar{\partial}u(\rho)$ on $X\setminus (P_0\cup D_\infty)$, , where $\eta$ is the Poincaré dual of the subvariety $D_\infty$, a function $\rho$ is defined by $\rho(z,w)=\log|(z,w)|^2_h$ for $(z,w)\in X$, and a function $u$ is a smooth function on $\mathbb{R}$.
\end{setup}

\begin{thm}\label{thm: main}
    Suppose Setup \ref{setup}. Let $p$ and $q$ be real numbers. Then, Conjecture \ref{conj} is true for the cohomology class $[\alpha]=p[\pi^*\omega_M]+q\eta$.
\end{thm}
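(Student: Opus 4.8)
Write $N=\dim_{\mathbb C}X=n+m+1$, so that in Conjecture \ref{conj} the ambient dimension is $N$ and the test subvarieties $V$ live in $X$. The overall plan is to use the Calabi ansatz to convert every quantity appearing in Conjecture \ref{conj} into an explicit expression in the two real parameters $(p,q)$, and then to verify the two equivalences by comparing regions in the $(p,q)$-plane. Concretely, on $X\setminus(P_0\cup D_\infty)$ I would write a Calabi-symmetric representative of $p[\pi^*\omega_M]+q\eta$ as $p\,\pi^*\omega_M+\delb\psi(\rho)$ and expand its wedge powers against $\omega=\pi^*\omega_M+\delb u(\rho)$. Using that $\omega_M=-F_h$ and that $\rho$ is the fibrewise log-norm, each product $\alpha_\varphi^{\,i}\wedge\omega^{N-i}$ factors into a base part, built from powers of $\omega_M$ on $M$, times a one-variable factor governed by the momentum profile $\tau=u'(\rho)$ and by the affine data encoding $q$. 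The profile $\tau$ sweeps a moment interval $[\tau_-,\tau_+]$ whose endpoints are fixed by the requirement that the form extend smoothly across the two fixed loci $P_0$ and $D_\infty$. This reduces both the positivity of all relevant forms and the solvability of \eqref{eq: cHe} to statements about one-variable profiles.

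All integrands in the definition of $\mathcal{P}_{k,\omega}$ and in Conjecture \ref{conj}\eqref{item: conj2} are invariant under the symmetry of Setup \ref{setup}, so it suffices to test them on invariant subvarieties. These fall into finitely many types: the fibres $\mathbb{P}^{m+1}$ and their linear subspaces, the section $P_0\cong M$, the divisor $D_\infty$, the restrictions of these over subvarieties $W\subseteq M$, and the total space $X$. For each type the integral $\int_V\beta^{\,k-N+p}\wedge\omega^{N-k}$ splits as an intersection number $\int_W\omega_M^{\dim W}$ on the base times an explicit polynomial in $(p,q)$ whose coefficients depend only on $k,\ell,m,n$ and $c$. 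Hence membership in $\mathcal{P}_{k,\omega}$ and the numerical inequalities of Conjecture \eqref{item: conj2} reduce to a finite system of polynomial inequalities in $(p,q)$, and I expect the binding constraints to come from the extremal subvarieties: a fibre, $P_0$, and $D_\infty$.

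By the first step, a class $p[\pi^*\omega_M]+q\eta$ lies in $\mathcal{K}_{k,\omega}$ exactly when there is a momentum profile on $[\tau_-,\tau_+]$ with the prescribed endpoint behaviour that keeps the one-variable factors positive for all $0\le i\le k$; a monotonicity-and-integration argument identifies the $(p,q)$ for which such a profile exists with the region cut out by the extremal inequalities of the second step evaluated along the ray $p[\pi^*\omega_M]+q\eta+t[\omega]$, $t\ge0$. This simultaneously gives the inclusion $\mathcal{K}_{k,\omega}\subseteq\mathcal{P}_{k,\omega}$ and the description $\mathcal{K}_{k,\omega}=\{[\alpha]:[\alpha]+t[\omega]\in\mathcal{P}_{k,\omega}\ \forall t\ge0\}$ on the slice. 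For Conjecture \eqref{item: conj} the forward implication is then the constant path; for the converse I would use that $\mathcal{K}_{k,\omega}$ is open, that $\mathcal{P}_{k,\omega}$ is preserved under adding $[\omega]$ (which I would verify on the slice), and that every extremal numerical function bounding $\mathcal{K}_{k,\omega}$ also bounds $\mathcal{P}_{k,\omega}$, so that $\mathcal{K}_{k,\omega}$ is relatively clopen in $\mathcal{P}_{k,\omega}$ and no path in $\mathcal{P}_{k,\omega}$ can join a class outside $\mathcal{K}_{k,\omega}$ to one inside it. For Conjecture \eqref{item: conj2}, assuming $[\alpha]\in\mathcal{K}_{k,\omega}$, the same profile reduction turns the extra positivity $k\alpha_\varphi^{k-1}\wedge\omega^{N-k}-c\ell\alpha_\varphi^{\ell-1}\wedge\omega^{N-\ell}>0$ into a one-variable positivity condition, which I would match with the displayed integral inequalities tested on the extremal invariant subvarieties of dimension $\ge N-\ell$; combining this with Theorem \ref{thm: sze}\eqref{item: sze} yields the equivalence with solvability of \eqref{eq: cHe}.

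Throughout, necessity is the easy direction: integrating a positive form over $V$ gives the numerical inequalities immediately. The hard direction is sufficiency, namely reconstructing from the finitely many extremal inequalities an actual profile that stays in the relevant positivity cone on all of $[\tau_-,\tau_+]$ with the endpoint behaviour forced by smoothness across $P_0$ and $D_\infty$. The subtle point is that the endpoint constraints are exactly the inequalities carried by $P_0$ and $D_\infty$, while the interior positivity is the fibre inequality, so one must show that these extremal constraints are not merely necessary but sufficient, with no hidden obstruction coming from intermediate subvarieties; this is where the explicit Calabi computation of the first step does the real work.
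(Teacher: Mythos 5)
Your plan reproduces the paper's point of departure --- the Calabi reduction of the equation to a one-variable problem with eigenvalues $y/x$ ($m$ times), $(p+y)/(1+x)$ ($n$ times), $y'$ on the moment interval $[0,b]$, boundary data $y(0)=0$, $y(b)=q$, and the identification of $P_0$ and $D_\infty$ as the dangerous loci --- but it defers precisely the step that constitutes the paper's proof. Your ``monotonicity-and-integration argument'' producing an admissible profile exactly on the region cut out by the extremal inequalities is not a routine consequence of the reduction: the ODE is singular at $x=0$ because of the eigenvalue $y/x$, so no off-the-shelf existence or continuity argument yields a smooth branch through $(0,0)$ staying in $\Gamma_k$. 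The paper's mechanism is (i) Fang--Lai's potential $G_p^{m,n,k}$, with $\frac{d}{dx}G_p^{m,n,k}(x,y(x))=x^m(1+x)^n\sigma_k(\lambda_{m,n}(x,y))$, which converts the singular ODE into the polynomial equation \eqref{eq: poly}, the constant $\mu$ being pinned down by the intersection-theoretic identity \eqref{eq: topological} (Proposition \ref{prop: boundary}); (ii) a downward induction through the hierarchy $F_i=G^{m-i,n,k-i}-\mu G^{m-i,n,\ell-i}$ (Lemmas \ref{lem: ini} and \ref{lem: ind}), where the numerical condition at $P_0$ enters through $\frac{d}{dy}F_m(0,0)>0$ and the inductive step runs on the Newton inequality together with the convexity $(D_y^2F_{i-1})(x,y)>0$ for $y\ge y_i(x)$; (iii) extension of the solution to all of $[0,b]$ via the strict monotonicity of $\sigma_k/\sigma_\ell$ in $\Gamma_k$ (Proposition \ref{prop: interval}); and (iv) the identification $(D_y^sF)(b,y)=C\int_{D_\infty^s}\bigl(\alpha_y^{k-s}\wedge\omega^{m+n+1-k}-c\,\alpha_y^{\ell-s}\wedge\omega^{m+n+1-\ell}\bigr)$, so that the numerical conditions along the $\eta$-cuts $D_\infty^s$ (when $\ell=0$) and along $D_\infty$ (when $\ell>0$) force the boundary value $y(b)=q$. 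None of these four steps, nor any substitute for them, appears in your proposal; your own closing sentence (``this is where the explicit Calabi computation of the first step does the real work'') concedes exactly this gap.

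Two further points. Your relative-clopenness argument for Conjecture \ref{conj} \eqref{item: conj} is circular as stated: the assertion that ``every extremal numerical function bounding $\mathcal{K}_{k,\omega}$ also bounds $\mathcal{P}_{k,\omega}$'' is the sufficiency of the numerical criterion, i.e.\ the very thing being proved; moreover the path in Conjecture \ref{conj} \eqref{item: conj} moves in all of $H^{1,1}(X,\mathbb{R})$, not merely in the two-dimensional invariant slice, so closedness of $\mathcal{K}_{k,\omega}$ along such a path cannot be certified by the $(p,q)$-polynomial picture alone. Second, your guess about the binding subvarieties is off in both directions: besides $P_0$ and $D_\infty$, the paper needs the subvarieties $D_\infty^s$ obtained by cutting with $\eta$ to pin the boundary value when $\ell=0$, while interior admissibility on $(0,b)$ is maintained not by a fibre inequality but by persistence of $(D_yF)>0$ via the monotonicity of $\sigma_k/\sigma_\ell$; and for part \eqref{item: conj2} the paper explicitly cannot eliminate $D_\infty$ (in contrast to the $k=\dim X$ case of Fang--Lai and Datar--Mete--Song, where only $P_0$ binds), so the clean dichotomy you predict --- endpoints controlled by $P_0$ and $D_\infty$, interior by the fibre --- is not what the proof establishes.
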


When $k=\dim X$, Theorem \ref{thm: main} was proved by \cite{FL} (see also \cite{DMS}). However, it is not straightforward to generalize their result to the cases $k<\dim X$, since their proof uses the property that $[\alpha]$ is K{\"a}hler in the case $k=\dim X$. 
In addition, \cite{Jacob} proved the solvability of harmonic polynomials, generalizing the case of the deformed Hermitian-Yang-Mills equation \cite{JS}, under certain numerical conditions. 
Regarding Conjecture \ref{conj} \eqref{item: conj2}, the proof of Theorem \ref{thm: main} shows that the subvarieties which possibly violate the numerical condition are only $P_0$ and $D_\infty$, while \cite{FL, DMS} show that only $P_0$ can violate a numerical condition. We do not know whether the possibility of $D_\infty$ can also be eliminated in Theorem \ref{thm: main}.  
Moreover, \cite{FL, DMS} analyzed the long-time behavior of the corresponding flow even when the numerical condition does not hold. We have not found analogous results in our cases $k<\dim X$.

We prove Conjecture \ref{conj} \eqref{item: conj} in semiample cases as well.

\begin{thm}\label{thm: semiample}
    Conjecture \ref{conj} \eqref{item: conj} is true when the cohomology class $[\alpha]$ is the first Chern class of a semiample line bundle $L$.
\end{thm}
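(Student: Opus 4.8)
Write $\mathcal K=\mathcal K_{k,\omega}$, $\mathcal P=\mathcal P_{k,\omega}$ and $n=\dim X$. The plan is to split Conjecture \ref{conj}\eqref{item: conj} into the general inclusion $\mathcal K\subseteq\mathcal P$ and the semiample implication $\mathcal P\cap\{\,c_1(L):L\text{ semiample}\,\}\subseteq\mathcal K$. The first inclusion handles the direction ``$[\alpha]\in\mathcal K\Rightarrow$ a path exists'': if it holds, the constant path at $[\alpha]$ lies in $\mathcal P$ and meets $\mathcal K$. To prove $\mathcal K\subseteq\mathcal P$ I would argue pointwise. Let $\varphi$ be strictly $\alpha$-$(\omega,k)$-subharmonic and let $V$ be a $p$-dimensional subvariety with $p\ge n-k+1$, so $j:=k-n+p$ satisfies $1\le j\le k$. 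The eigenvalues of $\alpha_\varphi$ relative to $\omega$ lie in the G{\aa}rding cone $\Gamma_k$; by the standard restriction property of $\Gamma_k$ (passing to a subspace of codimension $c\le k$ lands in $\Gamma_{k-c}$), the restriction of $\alpha_\varphi$ to any complex $p$-plane, which has codimension $n-p=k-j$, has eigenvalues in $\Gamma_j$. Hence $(\alpha_\varphi^{\,j}\wedge\omega^{n-k})|_{V_{\mathrm{reg}}}>0$, and since $[\alpha|_V]=[\alpha_\varphi|_V]$ we obtain $\int_V\alpha^{k-n+p}\wedge\omega^{n-k}=\int_V\alpha_\varphi^{\,k-n+p}\wedge\omega^{n-k}>0$, i.e.\ $[\alpha]\in\mathcal P$.

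For the converse direction, the hypothesis that a path in $\mathcal P$ passes through $[\alpha]$ already forces $[\alpha]\in\mathcal P$, so it suffices to construct a strictly $\alpha$-$(\omega,k)$-subharmonic representative whenever $[\alpha]=c_1(L)\in\mathcal P$ with $L$ semiample. I would fix $m\gg0$ so that $L^{\otimes m}$ is globally generated, giving a morphism $f\colon X\to Y\subseteq\mathbb P^N$ with $L^{\otimes m}=f^*\mathcal O_Y(1)$, choose a K{\"a}hler form $\omega_Y\in c_1(\mathcal O_Y(1))$ (working on a resolution of $Y$ and pushing down, or with the Fubini--Study form on the smooth locus), and take the semipositive representative $\alpha:=\tfrac1m f^*\omega_Y\in c_1(L)$. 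A fiber-integration computation then identifies the numerical condition: for a $p$-dimensional $V$ one has $\int_V\alpha^{\,j}\wedge\omega^{n-k}=0$ exactly when $\dim f(V)<j=p-(n-k)$, and $>0$ otherwise; consequently $[\alpha]\in\mathcal P$ is equivalent to the geometric statement that every fiber of $f$ has dimension at most $n-k$. Under this bound $\mathrm{rank}\,df\ge k$ holds on a Zariski-dense open set, so $\alpha$ is strictly $\alpha$-$(\omega,k)$-subharmonic off the proper analytic locus $B=\{\mathrm{rank}\,df<k\}$ (which is nonempty in general, e.g.\ at ramification points). I would also record that the segment $[\alpha]+t[\omega]$ stays in $\mathcal P$ for all $t\ge0$, by semipositivity of $\alpha$, and is K{\"a}hler, hence in $\mathcal K$, for every $t>0$.

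The main obstacle is to upgrade $\alpha$ to a genuinely strict representative on all of $B$, equivalently to show that $\mathcal K$ is closed inside $\mathcal P$ along the family $[\alpha]+t[\omega]$ as $t\to0^+$: a naive limit of the K{\"a}hler representatives only yields semipositivity, and one must use the fiber-dimension bound to recover strictness. The point I would exploit is that the degeneracy of $\alpha$ is confined to subvarieties over which $f$ contracts at most $n-k$ dimensions, which is exactly matched by the $n-k$ free powers of $\omega$ appearing in $\alpha_\varphi^{\,i}\wedge\omega^{n-i}$; this is what provides the ``positivity budget'' to correct $\alpha$ near $B$ by adding $\delb\varphi$ without disturbing the class. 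Concretely I would either carry out a direct construction, gluing $\alpha$ to local models with at least $k$ positive eigenvalues in a neighborhood of $B$ and absorbing the resulting error using $\omega^{n-k}$, or run the continuity/limit argument along $[\alpha]+t[\omega]$ with a priori control uniform in $t$ that deteriorates only along the codimension-$\ge k$ locus $B$. Establishing this persistence of strict $(\omega,k)$-positivity in the limit is the crux; once $[\alpha]\in\mathcal K$ is obtained, combining it with $\mathcal K\subseteq\mathcal P$ yields both directions of Conjecture \ref{conj}\eqref{item: conj} for semiample $[\alpha]$.
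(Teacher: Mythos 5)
Your first two moves are sound and essentially coincide with the paper's setup: the necessity direction $\mathcal{K}_{k,\omega}\subseteq\mathcal{P}_{k,\omega}$ does follow from the restriction property of $\Gamma_k$ you invoke (for an arbitrary $\omega$-orthonormal compression this is Cauchy interlacing combined with the standard fact that deleting one entry of a vector in $\Gamma_k$ lands in $\Gamma_{k-1}$, plus monotonicity of the $\sigma_j$), and your identification of $c_1(L)\in\mathcal{P}_{k,\omega}$ with the bound $\dim\le n-k$ on the fibers of the semiample fibration is exactly how the paper begins: it passes to the Iitaka fibration $\Phi\colon X\to Y$ with $\Phi^*L_Y=L$, notes $L$ is trivial on the fibers, and takes the semipositive representative $\alpha=\Phi^*\omega_Y$, which is strictly $(\omega,k)$-subharmonic off the degeneracy locus $B$ where $\alpha$ has at least $n-k+1$ zero eigenvalues.

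The genuine gap is the step you yourself flag as ``the crux'': producing a global potential making the representative strictly $(\omega,k)$-subharmonic across $B$. Neither of your two suggested routes is carried out, and the continuity/limit route along $[\alpha]+t[\omega]$ would fail as stated: as $t\to0^+$ there is no a priori control, and a limit of K\"ahler representatives is merely a semipositive current --- strictness in $\Gamma_k$ is precisely what is lost in the limit, so nothing forces a smooth $\Gamma_k$-representative of the boundary class. The paper fills this hole by quoting Matsumura's machinery \cite{Mat}: $B$ is a closed analytic set; it carries a filtration $B=D_{\dim B}\supset\cdots\supset D_0$ by analytic sets whose successive differences $D_s\setminus D_{s-1}$ are smooth, such that on every component $W$ with $\dim W>n-k$ the restriction $\alpha|_W$ still has $\dim W-n+k$ positive eigenvalues (this is where the fiber-dimension bound, your ``positivity budget,'' actually enters); and there are $C^2$ functions $\varphi_s$ whose Levi forms are positive in the normal directions of the $s$-dimensional strata and semipositive on $\overline{D_s}$. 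The strict representative is then
\begin{equation*}
\alpha+\delb\Bigl(\sum_{a=0}^{\dim B}\varepsilon^{\sum_{b=0}^{a}k^{b}}\varphi_{\dim B-a}\Bigr),
\end{equation*}
and the hierarchical weights $\varepsilon,\ \varepsilon^{1+k},\ \varepsilon^{1+k+k^{2}},\dots$ are the essential device your ``glue local models and absorb the error'' sketch is missing: at a point of $D_s\setminus D_{s-1}$ the term with weight $\varepsilon^{\sum_{b=0}^{\dim B-s}k^{b}}$ supplies the normal-direction positivity, so each $\sigma_j$ ($j\le k$) of the corrected eigenvalues is positive of order at worst $\varepsilon^{\sum_{b=1}^{\dim B-s+1}k^{b}}$, while the possibly negative contributions of the deeper corrections enter only at the strictly higher order $\varepsilon^{\sum_{b=0}^{\dim B-s+1}k^{b}}$ and are absorbed using monotonicity of $\sigma_j$ in $\Gamma_j$. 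Because $B$ is in general a singular, positive-dimensional set with strata of many dimensions, a single local perturbation near one stratum destroys the $\Gamma_k$-condition near the others; without this stratified, inductively weighted construction (or an equivalent substitute) the implication $c_1(L)\in\mathcal{P}_{k,\omega}\Rightarrow c_1(L)\in\mathcal{K}_{k,\omega}$ --- the entire content of Theorem \ref{thm: semiample} beyond Conjecture \ref{conj} \eqref{item: conj}'s easy direction --- remains unproved.
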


The paper is organized as follows. In Section \ref{sec: pre}, we review some well-known properties of elementary symmetric functions for the readers' convenience. In Section \ref{sec: calabi}, we prove Theorem \ref{thm: main}.
In Section \ref{sec: semi}, we prove Theorem \ref{thm: semiample} based on the arguments in \cite[Section 3]{Mat} with small modifications. In Appendix A, we make some remarks on the relation between $(\omega,k)$-subharmonicity and $(\dim X-k)$-positivity in the sense of Andreotti-Grauert.

\begin{subsection}*{Acknowledgements}
    The author wishes to express his gratitude to his advisor, Shin-ichi Matsumura, for his invaluable assistance in writing this paper, for suggesting the study of semiample cases, and for his constant support. He also thanks Ramesh Mete for helpful comments and discussions, and the anonymous referee for valuable comments.
    This work was supported by JSPS KAKENHI Grant Number JP24KJ0346.
\end{subsection}

\section{Preliminaries}\label{sec: pre}
In this section, for the reader's convenience, following \cite[Lecture 2]{Spr}, we briefly summarize the properties of the $k$-th elementary symmetric function which we will use later.

\begin{defn}Let $k$ be a non-negative integer.
    \begin{enumerate}
        \item The $k$-th elementary symmetric function $\sigma_k:\mathbb{R}^n\to\mathbb{R}$ is defined by
        \begin{equation*}
        \sigma_k(\lambda)=\sum_{i_1<i_2<\dots<i_k}{\lambda_{i_1}}{\lambda_{i_2}}\dots{\lambda_{i_k}}.
        \end{equation*}
        We define $\sigma_0(\lambda)=1$ for any $\lambda\in\mathbb{R}^n$.
        \item The admissible cone $\Gamma_k$ of the $k$-th elementary symmetric function $\sigma_k$ is defined by
        \begin{equation*}
            \Gamma_k:=\left\{\lambda\in\mathbb{R}^n\mid\sigma_i(\lambda)>0 \text{ for any $i$ such that $0\le i \le k$}\right\}.
        \end{equation*}
    \end{enumerate}
\end{defn}
Note that when $k=n$, the admissible cone $\Gamma_n$ is nothing but the positive cone 
\begin{equation*}
    \{\lambda\in\mathbb{R}^n\mid\lambda_i>0\text{ for any $i$}\}.
\end{equation*}
The closure of the positive cone is 
$
    \bar{\Gamma}_n:=\{\lambda\in\mathbb{R}^n\mid\lambda_i\ge0 \text{ for any $i$}\}.
$
A basic property of $\sigma_k$ is the monotonicity in $\Gamma_k$, which follows from the fact that $\sigma_k$ is a hyperbolic polynomial (see \cite[p9--10]{Spr}).

\begin{prop}[{\cite[Corollary 2.4 and Theorem 2.16]{Spr}}]\label{prop: mono}\mbox{}
    \begin{enumerate}[font=\normalfont]
        \item The $k$-th elementary symmetric function is strictly increasing in $\Gamma_k$, namely, for any $\lambda\in\Gamma_k$ and $\lambda'\in\lambda+\bar{\Gamma}_n\setminus\{0\}$, we have $\sigma_k(\lambda')>\sigma_k(\lambda)$.
        \item Let $0\le \ell<k \le n$. The function $$\left(\frac{\sigma_k\ \Big/{n\choose k}}{\sigma_\ell\ \Big/{n\choose \ell}}\right)^{\frac{1}{k-\ell}}$$ is strictly increasing in $\Gamma_k\subset\mathbb{R}^n$.
    \end{enumerate}
\end{prop}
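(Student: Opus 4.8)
The plan is to treat both statements as instances of a single scheme: a function $F$ on $\Gamma_k$ that is strictly increasing in the stated sense is exactly one whose directional derivative along every $v\in\bar{\Gamma}_n\setminus\{0\}$ is positive throughout $\Gamma_k$, and one then concludes by integrating along the straight segment $\lambda_t=\lambda+t(\lambda'-\lambda)$, $t\in[0,1]$. For this to make sense I first record the structural input that $\Gamma_k$ is an open convex cone containing the open positive orthant $\Gamma_n$; since an open convex cone $C$ satisfies $C+\bar{C}\subseteq C$ and $\bar{\Gamma}_n\subseteq\bar{\Gamma}_k$ (because $\Gamma_n\subseteq\Gamma_k$), the entire segment $\lambda_t$ stays inside $\Gamma_k$. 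Throughout I write $\hat\lambda_i\in\mathbb{R}^{n-1}$ for the vector obtained from $\lambda$ by deleting its $i$-th entry, and I use the two elementary identities $\partial\sigma_m/\partial\lambda_i=\sigma_{m-1}(\hat\lambda_i)$ and $\sigma_m(\lambda)=\sigma_m(\hat\lambda_i)+\lambda_i\,\sigma_{m-1}(\hat\lambda_i)$.

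For part (1), since $\sigma_k$ is hyperbolic with G\aa rding cone $\Gamma_k$, the restriction $\hat\lambda_i$ lies in the corresponding admissible cone $\Gamma_{k-1}\subseteq\mathbb{R}^{n-1}$ whenever $\lambda\in\Gamma_k$; in particular every first partial $\partial\sigma_k/\partial\lambda_i=\sigma_{k-1}(\hat\lambda_i)$ is positive on $\Gamma_k$. Hence for $v=\lambda'-\lambda\in\bar{\Gamma}_n\setminus\{0\}$ the derivative $\tfrac{d}{dt}\sigma_k(\lambda_t)$ is a nonnegative combination of strictly positive partials with at least one coefficient $v_i>0$, so it is strictly positive along the whole segment. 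Integrating over $t\in[0,1]$ gives $\sigma_k(\lambda')>\sigma_k(\lambda)$.

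For part (2), set $G=(P_k/P_\ell)^{1/(k-\ell)}$ with $P_m=\sigma_m/\binom{n}{m}$ and compute
\[
\frac{\partial}{\partial\lambda_i}\log G=\frac{1}{k-\ell}\left(\frac{\sigma_{k-1}(\hat\lambda_i)}{\sigma_k(\lambda)}-\frac{\sigma_{\ell-1}(\hat\lambda_i)}{\sigma_\ell(\lambda)}\right).
\]
Multiplying through by $\sigma_k(\lambda)\sigma_\ell(\lambda)>0$ and substituting $\sigma_m(\lambda)=\sigma_m(\hat\lambda_i)+\lambda_i\,\sigma_{m-1}(\hat\lambda_i)$, the terms carrying $\lambda_i$ cancel, so that the sign of $\partial_i\log G$ equals the sign of
\[
\sigma_{k-1}(\hat\lambda_i)\,\sigma_\ell(\hat\lambda_i)-\sigma_{\ell-1}(\hat\lambda_i)\,\sigma_k(\hat\lambda_i).
\]
This is a Newton-type expression in the $n-1$ variables $\hat\lambda_i$: the log-concavity inequalities $P_{m-1}P_{m+1}\le P_m^2$ (valid for all real vectors) make it nonnegative, and the mismatch of the binomial normalizing constants — precisely a factor $\ell(n-k)/\bigl(k(n-\ell)\bigr)<1$ for $\ell<k$ — upgrades this to a strict inequality on $\Gamma_k$, where $\sigma_0(\hat\lambda_i),\dots,\sigma_{k-1}(\hat\lambda_i)>0$. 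Thus $\partial_i\log G>0$ for every $i$, and integrating along $\lambda_t$ as before yields $G(\lambda')>G(\lambda)$.

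The genuine content lies entirely in the structural facts drawn from G\aa rding's theory of hyperbolic polynomials: the convexity of $\Gamma_k$, the admissibility of restrictions ($\lambda\in\Gamma_k\Rightarrow\hat\lambda_i\in\Gamma_{k-1}$, equivalently the positivity of all first partials of $\sigma_k$ on $\Gamma_k$), and the Newton log-concavity inequalities. Granting the hyperbolicity of $\sigma_k$ recorded above, each of these is standard, after which the monotonicity assertions reduce to the one-line directional-derivative computations and integration carried out here; the main thing to keep honest is the strictness in part (2), which pleasantly is forced automatically by the binomial factor rather than needing a separate equality-case analysis.
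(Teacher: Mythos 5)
Your proof is correct, and it is essentially the argument the paper points to: the paper does not prove this proposition itself but quotes it from \cite{Spr} (Corollary 2.4 and Theorem 2.16), remarking only that the monotonicity ``follows from the fact that $\sigma_k$ is a hyperbolic polynomial,'' which is exactly the structural input you isolate (convexity of the G\aa rding cone $\Gamma_k$, the restriction property $\lambda\in\Gamma_k\Rightarrow\hat\lambda_i\in\Gamma_{k-1}$ giving $\partial\sigma_k/\partial\lambda_i=\sigma_{k-1}(\hat\lambda_i)>0$, and the Newton inequalities, the last being the paper's own Proposition \ref{prop: Newton}). Your treatment of strictness in part (2) is also sound: after the cancellation of the $\lambda_i$-terms, the chained normalized Newton inequalities (legitimate here because $\sigma_0(\hat\lambda_i),\dots,\sigma_{k-1}(\hat\lambda_i)>0$, irrespective of the sign of $\sigma_k(\hat\lambda_i)$, with the case $k=n$ trivial since then $\sigma_k(\hat\lambda_i)\equiv 0$) yield $\sigma_k(\hat\lambda_i)\,\sigma_{\ell-1}(\hat\lambda_i)\le \frac{\ell(n-k)}{k(n-\ell)}\,\sigma_{k-1}(\hat\lambda_i)\,\sigma_\ell(\hat\lambda_i)$, so the binomial factor being strictly less than $1$ does force strict positivity without any equality-case analysis.
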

\begin{prop}[The Newton inequality, {\cite[Proposition 2.7]{Spr}}]\label{prop: Newton}
    Let $\lambda\in\mathbb{R}^n$ and $r$ an integer with $1\le r \le n$. Then, we have
    \begin{equation*}
            \frac{\sigma_{r-1}(\lambda)}{{n\choose {r-1}}}\cdot \frac{\sigma_{r+1}(\lambda)}{{n\choose{r+1}}}\le\left(\frac{\sigma_r(\lambda)}{{n\choose r}}\right)^2.
    \end{equation*}
    Furthermore, the equality holds if and only if all the components of $\lambda$ are equal. In particular, if $\sigma_r(\lambda)\neq0$, then we have
    $
            \sigma_{r-1}(\lambda)\sigma_{r+1}(\lambda)<\sigma^2_r(\lambda).
    $
    
\end{prop}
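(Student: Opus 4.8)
The plan is to prove the Newton inequality by homogenizing and then reducing, through iterated differentiation, to a binary quadratic form whose real-rootedness is exactly the discriminant inequality we want. First I would introduce the binary form
\[
Q(x,y) = \prod_{i=1}^{n}(x+\lambda_i y) = \sum_{k=0}^{n}\sigma_k(\lambda)\, x^{n-k}y^{k},
\]
which splits into $n$ real linear factors because all $\lambda_i$ are real. The basic tool is the two-variable form of Rolle's theorem: if a binary form of degree $d\ge 2$ is a product of real linear factors, then so are both $\partial_x$ and $\partial_y$ of it (dehomogenizing in the chart $y=1$, respectively $x=1$, this is just the statement that the derivative of a real-rooted one-variable polynomial is real-rooted). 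For $1\le r\le n-1$ I would apply the operator $\partial_x^{\,n-r-1}\partial_y^{\,r-1}$ to $Q$: this kills every monomial except those indexed by $k\in\{r-1,r,r+1\}$ and produces the binary quadratic
\[
R(x,y)=\tfrac{(n-r+1)!(r-1)!}{2}\,\sigma_{r-1}\,x^2 + (n-r)!\,r!\,\sigma_r\, xy + \tfrac{(n-r-1)!(r+1)!}{2}\,\sigma_{r+1}\,y^2,
\]
which by the tool above is again a product of real linear forms. (The boundary case $r=n$ is immediate, since $\sigma_{n+1}=0$.)

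Next I would read off the inequality from the discriminant. A real binary quadratic $Ax^2+Bxy+Cy^2$ factors into real linear forms precisely when $B^2-4AC\ge 0$, so $R$ yields $\big((n-r)!\,r!\,\sigma_r\big)^2 \ge (n-r+1)!(r-1)!(n-r-1)!(r+1)!\,\sigma_{r-1}\sigma_{r+1}$. Simplifying the factorials via $(n-r+1)!(n-r-1)! = \tfrac{n-r+1}{n-r}\big((n-r)!\big)^2$ and $(r+1)!(r-1)!=\tfrac{r+1}{r}(r!)^2$ collapses this to $\sigma_r^2 \ge \tfrac{(n-r+1)(r+1)}{(n-r)r}\,\sigma_{r-1}\sigma_{r+1}$, and a short binomial computation gives $\tfrac{(n-r)r}{(n-r+1)(r+1)} = \binom{n}{r-1}\binom{n}{r+1}\big/\binom{n}{r}^2$, which is exactly the asserted inequality after clearing the binomial coefficients.

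For the equality statement I would track strictness through the differentiation. If all $\lambda_i$ are equal then $Q=(x+\lambda y)^n$, so $R$ is a nonzero multiple of $(x+\lambda y)^2$ and the discriminant vanishes. Conversely, I would use the strict form of the tool: a real-split binary form of degree $d\ge 3$ having at least two distinct factors (as points of the real projective line) has derivative with at least two distinct factors as well, since the excluded case $\partial g = c\,(x-a)^{d-1}$ would force $g=c'(x-a)^d+\mathrm{const}$, which is not real-rooted for $d\ge 3$. Hence if the $\lambda_i$ are not all equal, distinctness propagates down to $R$, so $R$ has two distinct roots, $B^2-4AC>0$, and the inequality is strict; this gives the equivalence. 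Finally, the ``in particular'' claim follows by combining the (possibly sharp) normalized inequality with the strict log-concavity of the binomial coefficients themselves, namely $\binom{n}{r-1}\binom{n}{r+1}<\binom{n}{r}^2$ (indeed the ratio above equals $\tfrac{r(n-r)}{(r+1)(n-r+1)}<1$): when $\sigma_r\neq 0$ this strict factor forces $\sigma_{r-1}\sigma_{r+1}<\sigma_r^2$.

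The main obstacle is the equality case rather than the inequality: once the reduction is set up, the inequality is a one-line discriminant consequence, whereas showing that equality forces all $\lambda_i$ equal requires the strict version of Rolle's theorem together with the careful verification that ``at least two distinct roots'' is preserved under each of the $n-2$ differentiations until the quadratic stage is reached.
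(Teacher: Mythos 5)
Your derivation of the displayed inequality and of the final ``in particular'' clause is correct, and it is the classical argument; note that the paper itself gives no proof of this proposition (it is quoted from \cite[Proposition~2.7]{Spr}), so there is no internal argument to compare against. The key steps all check out: $Q(x,y)=\prod_i(x+\lambda_iy)$ is real-split; a partial derivative of a real-split binary form is again real-split or identically zero (the latter is harmless for the inequality, since it forces $\sigma_{r-1}=\sigma_r=\sigma_{r+1}=0$); the operator $\partial_x^{\,n-r-1}\partial_y^{\,r-1}$ isolates exactly the coefficients $\sigma_{r-1},\sigma_r,\sigma_{r+1}$ with the factorial constants you computed; and the discriminant condition together with the identity $\binom{n}{r-1}\binom{n}{r+1}\big/\binom{n}{r}^2=\frac{r(n-r)}{(r+1)(n-r+1)}<1$ yields both the normalized inequality and, when $\sigma_r\neq0$, the strict unnormalized one. (At $r=n$ the quotient by $\binom{n}{n+1}=0$ in the statement must in any case be read as $0$, which your parenthetical handles.)

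The equality case, however, contains a genuine gap, and it is not a repairable slip. Your strict-Rolle lemma --- that a real-split binary form of degree $d\ge3$ with at least two distinct projective roots has derivatives with at least two distinct projective roots --- is false once the form has roots at infinity of the chart you differentiate in, i.e.\ once factors of $x$ (for $\partial_y$) or of $y$ (for $\partial_x$) occur: your integration argument ``$\partial g=c(x-a)^{d-1}$ forces $g=c'(x-a)^d+\mathrm{const}$'' tacitly assumes the dehomogenized polynomial has full degree $d$. Concretely, take $n=3$, $\lambda=(1,0,0)$, $r=2$: then $Q=x^2(x+y)$ has two distinct projective roots, yet $R=\partial_yQ=x^2$ is a perfect square. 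This is not an artifact of the method: for this very $\lambda$, equality holds in the proposition ($\sigma_2=\sigma_3=0$, so both sides vanish) although the components are not all equal, so the ``only if'' half of the equality clause, as stated for arbitrary $\lambda\in\mathbb{R}^n$, is false, and no proof can close your gap without extra hypotheses. The classical characterization (equality iff all $\lambda_i$ equal) is valid under a positivity assumption, e.g.\ all $\lambda_i>0$ as in Hardy--Littlewood--P\'olya, or after enlarging the equality locus by degenerate vanishing configurations such as the one above; if you restrict to such $\lambda$, your distinctness-propagation argument can be carried out because no factors of $x$ or $y$ ever appear with multiplicity causing the degree collapse. In summary: inequality and ``in particular'' clause --- proved; equality biconditional --- your argument breaks precisely at the distinctness-propagation step, and necessarily so, since the clause as quoted fails on $\mathbb{R}^n$.
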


\section{Proof of Theorem \ref{thm: main}}\label{sec: calabi}
\subsection{Set-up}
In this section, as calculated by \cite{FL}, we express complex Hessian equations
\begin{equation}\label{eq: cHe'}
    \alpha^k\wedge\omega^{\dim X-k}=c \alpha^l\wedge\omega^{\dim X-\ell},
\end{equation}
as ordinary differential equations via the Calabi ansatz technique. We use the same notations as in Setup \ref{setup}. Let $v$ be a smooth function on $\mathbb{R}$ and consider a closed real form $\alpha:=p\pi^*\omega_M+\delb v(\rho)$ on $X\setminus(P_0\cup D_\infty)$, where $p\in\mathbb{R}$.
By \cite{Cal} (see also \cite{DMS}), the form $\alpha$ can be extended to a closed real form on $X$ if 
\begin{itemize}
    \item $V_0(r):=v(\log r)$ can be extended smoothly across $r=0$,
    \item $V_\infty(r):=v(-\log r)+q\log r$ can be extended smoothly across $r=0$, where $q\in\mathbb{R}$.
\end{itemize}
Moreover, if these conditions are met, the cohomology class of the resulting form, denoted also by $\alpha$, is $p[\pi^*\omega_M]+q\eta$. Also, for the function $u$ in Setup \ref{setup}, the following additional conditions are met since the resulting form $\omega$ is K{\"a}hler:
\begin{itemize}
    \item $u'>0$ and $u''>0$,
    \item $U_0(r):=u(\log r)$ satisfies $U_0'(0)>0$,
    \item $b>0$.
\end{itemize}
In particular, we have 
\begin{align*}
    \lim_{\rho\to -\infty}v'(\rho)=0, \quad \lim_{\rho\to\infty} v'(\rho)=q,\\
    \lim_{\rho\to -\infty}u'(\rho)=0, \quad \lim_{\rho\to\infty}u'(\rho)=b.
\end{align*}
The eigenvalues of $\alpha$ with respect to $\omega$ are 
\begin{equation*}
    \underbrace{\frac{v'}{u'}}_{m\text{-times}}, \, \underbrace{\frac{p+v'}{1+u'}}_{n\text{-times}}, \, \frac{v''}{u''}.
\end{equation*}
Since $u''>0$, we take $x=u'$ as a parameter. Define a function $y\in C^\infty([0,b])$ by $y(x)=v'$. Then, the eigenvalues can be written as
\begin{equation*}
    \underbrace{\frac{y}{x}}_{m\text{-times}}, \, \underbrace{\frac{p+y}{1+x}}_{n\text{-times}}, \, y'.
\end{equation*}
In summary,  the complex Hessian equation \eqref{eq: cHe'} of the form $\alpha\in p[\pi^*\omega_M]+q\eta$ is formulated as 
\begin{equation}\label{eq: ODE}
\begin{cases}
    \sigma_k\Bigl(\underbrace{\frac{y}{x}}_{m\text{-times}}, \underbrace{\frac{p+y}{1+x}}_{n\text{-times}}, y'\Bigr)\Big/{m+n+1\choose k}=c \sigma_\ell\Bigl(\underbrace{\frac{y}{x}}_{m\text{-times}}, \underbrace{\frac{p+y}{1+x}}_{n\text{-times}}, y'\Bigr)\Big/{m+n+1 \choose \ell},
    \\y(0)=0, \quad y(b)=q,
\end{cases}
\end{equation}
Note that the equation in \eqref{eq: ODE} is a non-linear ODE of first order. Fang-Lai \cite{FL} observed that
$$\frac{d}{dx}G_{p}^{m,n,k}(x,y(x))=x^m(1+x)^n\sigma_k\Bigl(\underbrace{\frac{y}{x}}_{m\text{-times}}, \underbrace{\frac{p+y}{1+x}}_{n\text{-times}}, y'\Bigr),$$
where $G_{p}^{m,n,k}$ is defined by
$$G_{p}^{m,n,k}(x,y):=\frac{1}{k!}\frac{d^k}{dt^k}\Big|_{t=0}\int_0^{x+ty}s^m(1+tp+s)^n ds.$$
Note that $G_{p}^{m,n,k}$ is a polynomial of $x$ and $y$. 
Thus, the Dirichlet problem \eqref{eq: ODE} can be interpreted to
\begin{equation}\label{eq: poly}
    \begin{cases}
        G_{p}^{m,n,k}(x,y(x))=\mu G_{p}^{m,n,\ell}(x,y(x)), \\
        y(0)=0, \quad y(b)=q,
    \end{cases}
\end{equation}
where $\mu$ is the constant defined by $\mu:=c{m+n+1\choose k}/{m+n+1\choose \ell}$. Indeed, if we have \eqref{eq: poly}, we get \eqref{eq: ODE} by differentiating \eqref{eq: poly}. Conversely, if we have \eqref{eq: ODE}, we get the first equation of \eqref{eq: poly} by integrating the first equation of \eqref{eq: ODE} from 0 to $x$ and using $y(0)=0$. Note that the other boundary condition $y(b)=q$ does not violate the first equation of \eqref{eq: poly} by the following:
\begin{prop}\label{prop: boundary}
    $G_{p}^{m,n,k}(b,q)=\mu G_{p}^{m,n,\ell}(b,q)$.
\end{prop}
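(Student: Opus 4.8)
The plan is to read the claimed identity as the Calabi--ansatz incarnation of the cohomological necessary condition for \eqref{eq: cHe'}. Write $N:=m+n+1=\dim X$ and note that $c$ is not a free parameter: since $e^H=c$ is constant, the compatibility condition from the introduction forces $$\int_X\alpha^k\wedge\omega^{N-k}=c\int_X\alpha^\ell\wedge\omega^{N-\ell}.$$ Both sides are cohomological pairings of $[\alpha]=p[\pi^*\omega_M]+q\eta$ and $[\omega]=[\pi^*\omega_M]+b\eta$, so I may evaluate them using any representative; I use the Calabi--symmetric form $\alpha=p\pi^*\omega_M+\delb v(\rho)$ fixed above, whose eigenvalues with respect to $\omega$ are the tuple $\lambda$ with entries $y/x$, $(p+y)/(1+x)$ and $y'$ recorded earlier. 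The first step is the pointwise linear-algebra identity $\alpha^j\wedge\omega^{N-j}=\binom{N}{j}^{-1}\sigma_j(\lambda)\,\omega^N$ for $j\in\{k,\ell\}$, valid on the dense open set $X\setminus(P_0\cup D_\infty)$ and hence almost everywhere on $X$.

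The heart of the argument is a reduction of the $N$-dimensional integral to the one-dimensional $x$-variable. For any radial function $g$ (a function of $\rho$, equivalently of $x=u'(\rho)$), the Calabi ansatz produces a positive constant $C$, depending only on $m,n$, $\int_M\omega_M^n$ and the circle-fiber normalization, with $$\int_X g\,\omega^N=C\int_0^b g\,x^m(1+x)^n\,dx,$$ where $x$ ranges over $[0,b]$, the endpoints $x=0$ and $x=b$ corresponding to $P_0$ and $D_\infty$. Crucially $C$ is independent of $g$, hence of $j$, since it encodes only the pushforward of $\omega^N$. Applying this with $g=\sigma_j(\lambda)$ and using the Fang--Lai relation $\frac{d}{dx}G^{m,n,j}_p(x,y(x))=x^m(1+x)^n\sigma_j(\lambda)$ together with $y(0)=0$, $y(b)=q$ and $G^{m,n,j}_p(0,0)=0$ (the inner integral $\int_0^0$ vanishes), the fundamental theorem of calculus gives $$\int_X\alpha^j\wedge\omega^{N-j}=\binom{N}{j}^{-1}C\,G^{m,n,j}_p(b,q),\qquad j\in\{k,\ell\}.$$

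Substituting these two evaluations into the necessary condition, the common factor $C$ cancels and I obtain $\binom{N}{k}^{-1}G^{m,n,k}_p(b,q)=c\,\binom{N}{\ell}^{-1}G^{m,n,\ell}_p(b,q)$; multiplying by $\binom{N}{k}$ and recalling $\mu=c\binom{N}{k}/\binom{N}{\ell}$ yields exactly $G^{m,n,k}_p(b,q)=\mu\,G^{m,n,\ell}_p(b,q)$. The only genuine work, and the step I expect to be the main obstacle, is justifying the radial reduction formula with a $j$-independent weight $x^m(1+x)^n$: one must carry out the volume-form bookkeeping in the Calabi ansatz (integrating out the base $M$ and the circle fibers and performing the change of variables $\rho\mapsto x=u'$) and verify that the form extends across $P_0\cup D_\infty$, so that the integral over $X$ equals the integral over the dense open set with no boundary contributions at $x=0,b$. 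Once that dictionary is in place, the proposition is a one-line cancellation.
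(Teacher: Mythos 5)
Your argument is correct, but it reaches the key identity by a genuinely different route than the paper. The paper never integrates anything over $X$: it quotes the explicit intersection formula of \cite[Lemma A.3]{DMS} for the top self-intersection $(\xi[\pi^*\omega_M]+\zeta\eta)^{m+n+1}$, matches it term-by-term with the polynomial $G^{m,n,m+n+1}_{\xi}(b,\zeta)$, and then extracts the identity \eqref{eq: topological}, namely $G^{m,n,k}_p(b,q)=C\binom{m+n+1}{k}[\alpha]^k[\omega]^{m+n+1-k}$, by a polarization trick (differentiating $G^{m,n,m+n+1}_{1+\varepsilon p}(b,b+\varepsilon q)=C([\omega]+\varepsilon[\alpha])^{m+n+1}$ $k$ times at $\varepsilon=0$). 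You instead derive the same identity analytically: choose a Calabi-symmetric representative of $[\alpha]$, use the pointwise eigenvalue identity $\alpha^j\wedge\omega^{N-j}=\binom{N}{j}^{-1}\sigma_j(\lambda)\,\omega^N$, push $\omega^N$ forward to the interval $[0,b]$ with the $j$-independent weight $x^m(1+x)^n$, and integrate the Fang--Lai relation by the fundamental theorem of calculus; both proofs then conclude identically, since $\mu$ is pinned down by the compatibility condition for \eqref{eq: cHe'}. Your route is more transparent -- it explains \emph{why} $G$ shows up (it is the antiderivative of the radial volume density) rather than verifying a polynomial coincidence -- but it carries two obligations the paper's algebraic proof avoids: the volume bookkeeping across $P_0\cup D_\infty$ that you flag as the main obstacle (harmless here, since $P_0\cup D_\infty$ is measure zero, $y$ is smooth on $[0,b]$, and $x^m(1+x)^n\sigma_j(\lambda)$ extends continuously to $x=0$ because $y(0)=0$), and the existence of a symmetric representative with profile $y(0)=0$, $y(b)=q$, which you assume rather than produce; the latter is easily discharged by taking $v=(q/b)u$, giving $y(x)=qx/b$ and inheriting the smooth extendability of $V_0$, $V_\infty$ from that of $u$. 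With those two points made explicit, your proof is complete and trades the dependence on \cite[Lemma A.3]{DMS} for elementary Calabi-ansatz analysis already implicit in \cite{FL}.
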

\begin{proof}
    We use the computations in \cite[Appendix A]{DMS}. For a class $\xi [\pi^*\omega_M]+\zeta \eta$, where $\xi, \, \zeta \in \mathbb{R}$, by \cite[Lemma A.3]{DMS}, we have
    $$(\xi [\pi^*\omega_M]+\zeta \eta)^{m+n+1}=d (m+n+1) {m+n+1\choose n}\sum_{i=0}^n {n\choose i}\frac{\xi^{n-i}\zeta^{m+1+i}}{m+i+1},$$
    where $d:= c_1(-L)^n$. On the other hand,
    \begin{align*}
        G_{\xi}^{m,n,m+n+1}(b,\zeta)
        &=\frac{1}{(m+n+1)!}\frac{d^{m+n+1}}{dt^{m+n+1}}\Big|_{t=0}\int_0^{b+t\zeta}s^m(1+t\xi+s)^n ds\\
        &=\frac{1}{(m+n+1)!}\frac{d^{m+n+1}}{dt^{m+n+1}}\Big|_{t=0}\int_0^{b+t\zeta}\sum_{i=0}^n{n\choose i}s^m(1+s)^i (t\xi)^{n-i} ds\\
        &=\sum_{i=0}^n{n\choose i} \frac{\zeta^{m+i+1}\xi^{n-i}}{m+i+1}
    \end{align*}
    Therefore, for $\varepsilon\in\mathbb{R}$, we have
    $$G_{1+\varepsilon p}^{m,n,m+n+1}(b,b+\varepsilon q)=C ([\omega]+\varepsilon [\alpha])^{m+n+1}$$
    for some constant $C$. For the right-hand side, we have
    $$\frac{1}{k!}\frac{d}{d\varepsilon^k}\Big|_{\varepsilon=0}([\omega]+\varepsilon[\alpha])^{m+n+1}={m+n+1\choose k}[\alpha]^k[\omega]^{m+n+1-k}.$$
    For the left-hand side, we have
    \begin{align*}
        &\frac{1}{k!}\frac{d}{d\varepsilon^k}\Big|_{\varepsilon=0}G_{1+\varepsilon p}^{m,n,m+n+1}(b,b+\varepsilon q)\\
        = &\frac{1}{k!}\frac{d}{d\varepsilon^k}\Big|_{\varepsilon=0}\frac{1}{(m+n+1)!}\frac{d^{m+n+1}}{dt^{m+n+1}}\Big|_{t=0}\int_0^{b+t(b+\varepsilon q)}s^m(1+t(1+\varepsilon p)+s)^n ds\\
        =&\frac{1}{k!}\frac{d}{d\varepsilon^k}\Big|_{\varepsilon=0} \sum_{i=0}^n {n\choose i}(1+\varepsilon p)^{n-i} \frac{(b+\varepsilon q)^{m+i+1}}{m+i+1}\\
        =&\frac{1}{k!}\frac{d}{dt^k}\Big|_{t=0} \sum_{i=0}^n {n\choose i}(1+t p)^{n-i} \frac{(b+t q)^{m+i+1}}{m+i+1}\\
        =&\frac{1}{k!}\frac{d}{dt^k}\Big|_{t=0}\int_0^{b+tq}s^m(1+tp+s)^n ds\\
        =&G_{p}^{m,n,k}(b,q).
    \end{align*}
    Therefore, we have
    \begin{equation}\label{eq: topological}
        G_{p}^{m,n,k}(b,q)=C {{m+n+1}\choose k}[\alpha]^k[\omega]^{m+n+1-k}.
    \end{equation}
    Since we defined $\mu$ as 
    $$\mu=\frac{{{m+n+1}\choose k} [\alpha]^k[\omega]^{{m+n+1-k}}}{{{m+n+1}\choose \ell}[\alpha]^\ell[\omega]^{m+n+1-\ell}},$$
    we get the assertion.
\end{proof}
In summary, solving the complex Hessian equation \eqref{eq: cHe'} in the class of $\alpha$-$(\omega,k)$-subharmonic functions is interpreted to solving the equation \eqref{eq: poly} with the condition
\begin{equation}\label{eq: gamma_k}
    \Bigl(\underbrace{\frac{y}{x}}_{m\text{-times}}, \, \underbrace{\frac{p+y}{1+x}}_{n\text{-times}}, \, y'\Bigr) \in \Gamma_k.
\end{equation}
Hereafter, 
for $\xi\in\mathbb{R}\setminus\{0\}$ and $\zeta\in\mathbb{R}$, we define
\begin{equation*}
   \check{\lambda}_{m,n}(\xi,\zeta):=\Bigl(\underbrace{\frac{\zeta}{\xi}}_{m\text{-times}}, \underbrace{\frac{p+\zeta}{1+\xi}}_{n\text{-times}}\Bigr).
\end{equation*}
For a function $y(x)$ such that $y(0)=0$, we also define
\begin{equation*}
    \lambda_{m,n}(x,y):=\Bigl(\underbrace{\frac{y}{x}}_{m\text{-times}}, \underbrace{\frac{p+y}{1+x}}_{n\text{-times}}, y'\Bigr), 
    \quad  \lambda_{m,n}(0,y(0)):=\Bigl(\underbrace{y'(0)}_{(m+1)\text{-times}}, \underbrace{p}_{n\text{-times}}\Bigr),
\end{equation*}
and $\check{\lambda}_{m,n}(0,y(0)):=\lambda_{m-1,n}(0,y(0))$.
In the rest of the paper, we omit $p$ of $G_p(x,y)$ since $p$ is fixed.

\subsection{The solution of the boundary value problem}\label{sec: $x=0$}
Now, we solve the boundary value problem \eqref{eq: poly} with condition \eqref{eq: gamma_k}, which concludes Theorem \ref{thm: main}. The proof is decomposed into three parts: near $x=0$, $0<x<b$, and at $x=b$. We first solve \eqref{eq: poly} near $x=0$. 

\begin{prop}\label{prop: near}
    Suppose Setup \ref{setup}. Let $[\alpha]$ be a cohomology class $p[\pi^*\omega_M]+q\eta$, where $p,q\in\mathbb{R}$.
    If the pair $([\alpha],[\omega])$ satisfies the numerical condition as in Conjecture \ref{conj} regarding the subvariety $P_0$, then there exist a constant $\varepsilon$ and a function $y\in C^\infty([0,\varepsilon))$ satisfying \eqref{eq: poly} \textup{(}without the condition $y(b)=q$\textup{)} with condition \eqref{eq: gamma_k}.
\end{prop}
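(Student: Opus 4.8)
The plan is to treat the first equation of \eqref{eq: poly} as defining a plane curve $P(x,y):=G^{m,n,k}(x,y)-\mu G^{m,n,\ell}(x,y)=0$ near the singular point $(0,0)$ (where $P_0$ collapses) and to produce the branch through the origin with the correct tangent. First I would record the local structure of the polynomials $G^{m,n,k}$: expanding $\int_0^{x+ty}s^m(1+tp+s)^n\,ds$ for small argument shows that its terms of lowest $(x,y)$-degree come from $\tfrac{(1+tp)^n}{m+1}(x+ty)^{m+1}$, so neither $G^{m,n,k}$ nor $G^{m,n,\ell}$ has a monomial of degree below $m+1$, and the degree-$(m+1)$ part of $G^{m,n,k}$ equals $\tfrac1{m+1}$ times the coefficient of $t^k$ in $(1+tp)^n(x+ty)^{m+1}$. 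Writing $a:=y'(0)$ and, for $a\in\mathbb{R}$, setting $\Lambda(a):=(\underbrace{a,\dots,a}_{m+1},\underbrace{p,\dots,p}_{n})$ (so that $\Lambda(y'(0))=\lambda_{m,n}(0,0)$), this gives $P_{\mathrm{lead}}(1,a)=\tfrac1{m+1}\big(\sigma_k(\Lambda(a))-\mu\,\sigma_\ell(\Lambda(a))\big)$ for the leading form $P_{\mathrm{lead}}$ of $P$. Hence the admissible tangent slopes of a branch of $\{P=0\}$ through the origin are exactly the roots $a$ of $\sigma_k(\Lambda(a))/\binom{m+n+1}{k}=c\,\sigma_\ell(\Lambda(a))/\binom{m+n+1}{\ell}$.

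Next I would select the correct root by monotonicity. On the interval $I:=\{a\in\mathbb{R}\mid\Lambda(a)\in\Gamma_k\}$ the ratio $R(a):=\frac{\sigma_k(\Lambda(a))/\binom{m+n+1}{k}}{\sigma_\ell(\Lambda(a))/\binom{m+n+1}{\ell}}$ is well defined and positive, and since increasing $a$ moves $\Lambda(a)$ in the direction $(\underbrace{1,\dots,1}_{m+1},\underbrace{0,\dots,0}_{n})\in\bar{\Gamma}_{m+n+1}\setminus\{0\}$, Proposition \ref{prop: mono} shows that $R$ is strictly increasing on $I$. Thus $R$ is a continuous strictly increasing bijection of $I$ onto an interval $(R_-,R_+)$, where $R_-=0$ on the boundary face of $\Gamma_k$ where $\sigma_k$ vanishes and $R_+$ is the (possibly infinite) limit as $a$ leaves $I$ in the positive direction. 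Computing the restrictions to $P_0\cong M$, where $\lim_{\rho\to-\infty}v'=0$ forces $\alpha|_{P_0}=p\,\omega_M$ and $\omega|_{P_0}=\omega_M$, I would check that the numerical condition of Conjecture \ref{conj} attached to $P_0$ is precisely the statement $c\in(R_-,R_+)$. The intermediate value theorem then yields a unique $a_0\in I$ with $R(a_0)=c$; in particular $\Lambda(a_0)\in\Gamma_k$ is an interior, i.e.\ strictly admissible, point.

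With $a_0$ in hand I would promote the tangent slope to a genuine smooth solution. Differentiating $R$ at the crossing gives $0<R'(a_0)=\frac{m+1}{\sigma_\ell(\Lambda(a_0))/\binom{m+n+1}{\ell}}\left(\frac{\sigma_{k-1}(\check\Lambda(a_0))}{\binom{m+n+1}{k}}-c\,\frac{\sigma_{\ell-1}(\check\Lambda(a_0))}{\binom{m+n+1}{\ell}}\right)$, where $\check\Lambda(a):=(\underbrace{a,\dots,a}_{m},\underbrace{p,\dots,p}_{n})=\check\lambda_{m,n}(0,0)$, so the parenthesized quantity is positive. This quantity is both the coefficient of $y'$ obtained when one solves the first equation of \eqref{eq: ODE} for $y'$ and, up to a positive factor, equals $\partial_a P_{\mathrm{lead}}(1,a_0)$; hence $a_0$ is a \emph{simple} root of $P_{\mathrm{lead}}(1,\cdot)$ (this is the infinitesimal form at $P_0$ of the subsolution positivity $k\alpha_\varphi^{k-1}\wedge\omega^{\dim X-k}-c\ell\alpha_\varphi^{\ell-1}\wedge\omega^{\dim X-\ell}>0$). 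I would then blow up the singularity by setting $y=xw$: because $P$ has no terms of degree below $m+1$, one has $P(x,xw)=x^{m+1}Q(x,w)$ with $Q$ a polynomial satisfying $Q(0,w)=P_{\mathrm{lead}}(1,w)$, so $Q(0,a_0)=0$ and $\partial_wQ(0,a_0)\neq0$. The implicit function theorem provides a smooth $w(x)$ with $w(0)=a_0$ and $Q(x,w(x))=0$ on some $[0,\varepsilon)$, whence $y(x):=x\,w(x)\in C^\infty([0,\varepsilon))$ solves $G^{m,n,k}(x,y)=\mu G^{m,n,\ell}(x,y)$ with $y(0)=0$ and $y'(0)=a_0$. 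Finally, since $\Lambda(a_0)\in\Gamma_k$ is an open condition and $\lambda_{m,n}(x,y(x))\to\Lambda(a_0)$ as $x\to0^+$, shrinking $\varepsilon$ ensures that \eqref{eq: gamma_k} holds on all of $[0,\varepsilon)$.

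The main obstacle is precisely the degeneracy at $x=0$: the coefficient $x^m(1+x)^n$ in the identity $\frac{d}{dx}G^{m,n,k}(x,y(x))=x^m(1+x)^n\sigma_k(\lambda_{m,n}(x,y))$ degenerates there and $y/x$ is an indeterminate $0/0$, so Picard--Lindel\"of does not apply and one cannot simply prescribe an initial slope. Everything rests on the crossing $R(a_0)=c$ being transversal rather than tangential, i.e.\ on $a_0$ being a simple root; I expect this to be the delicate step, and it is exactly what the strict monotonicity in Proposition \ref{prop: mono} guarantees and what makes the substitution $y=xw$ regular enough for the implicit function theorem. A secondary, purely computational, point is the translation of the cohomological inequality over $P_0$ into the inequality $c\in(R_-,R_+)$, which is where the explicit Calabi-symmetric restrictions $\alpha|_{P_0}=p\,\omega_M$ and $\omega|_{P_0}=\omega_M$ enter.
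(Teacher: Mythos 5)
Your route is genuinely different from the paper's: you resolve the singular point $(0,0)$ of $P:=G^{m,n,k}-\mu G^{m,n,\ell}$ by a single blow-up $y=xw$ and apply the implicit function theorem at a simple root of the leading form, whereas the paper runs an induction on $i$ through the family $F_i=G^{m-i,n,k-i}-\mu G^{m-i,n,\ell-i}$ (Lemmas \ref{lem: ini} and \ref{lem: ind}), producing each successive root by a sign-plus-convexity argument ($F_{i-1}(x,y_i(x))<0$ together with $D_y^2F_{i-1}>0$ for $y\ge y_i(x)$). Your computation of the lowest-degree part, $P_{\mathrm{lead}}(1,a)=\tfrac{1}{m+1}\bigl(\sigma_k-\mu\sigma_\ell\bigr)(\Lambda(a))$, the factorization $P(x,xw)=x^{m+1}Q(x,w)$ with $Q(0,\cdot)=P_{\mathrm{lead}}(1,\cdot)$, and the final shrinking step for \eqref{eq: gamma_k} are all correct; if your two asserted inputs held, this would prove Proposition \ref{prop: near} in one stroke and even absorb the paper's separate linear-ODE base case $m+1\ge k$. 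But both inputs have genuine gaps.

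The main one is the transversality $R'(a_0)>0$, which you attribute to the strict monotonicity of Proposition \ref{prop: mono}. That is a non sequitur: strict monotonicity of $R$ on $I$ gives only $R'(a_0)\ge 0$, since a strictly increasing function can have a vanishing derivative at a point --- concretely, $g(a):=(\sigma_k-\mu\sigma_\ell)(\Lambda(a))$ is a polynomial that could a priori change sign at $a_0$ through a root of multiplicity three, and then your implicit function theorem step collapses. The correct ingredient is the \emph{strict} Newton inequality (Proposition \ref{prop: Newton}): if $(\sigma_{k-1}-\mu\sigma_{\ell-1})(\check{\Lambda}(a_0))=0$, then the expansion $\sigma_j(\Lambda(a_0))=a_0\,\sigma_{j-1}(\check{\Lambda}(a_0))+\sigma_j(\check{\Lambda}(a_0))$ together with $g(a_0)=0$ forces $(\sigma_k-\mu\sigma_\ell)(\check{\Lambda}(a_0))=0$ as well; for $\ell\ge1$ this yields $\sigma_k/\sigma_{k-1}=\sigma_\ell/\sigma_{\ell-1}$ at $\check{\Lambda}(a_0)$, which lies in $\Gamma_k$ (removing one entry of a $\Gamma_k$-vector gives a $\Gamma_{k-1}$-vector, and $\sigma_k=\mu\sigma_\ell>0$ there since $\mu>0$ is forced at any admissible root), contradicting the strict decrease of $r\mapsto\sigma_r/\sigma_{r-1}$; for $\ell=0$ it forces $\sigma_{k-1}(\check{\Lambda}(a_0))=0$, contradicting $\check{\Lambda}(a_0)\in\Gamma_{k-1}$. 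This is precisely the $i=0$ shadow of what the paper proves at every stage of its induction (conditions \eqref{eq: ass} and \eqref{eq: con}, via the same contradiction $\mu\le\sigma_{k-i+1}/\sigma_{\ell-i+1}<\sigma_{k-i}/\sigma_{\ell-i}=\mu$), so the ingredient is available --- it just does not follow from monotonicity.

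The second gap is the step you leave as ``I would check'': the equivalence of the $P_0$-numerical condition with $c\in(R_-,R_+)$, which is the only place the hypothesis enters. The upper endpoint matches the $P_0$-condition when $m+1\le\ell$; in the remaining cases you need $R_+=+\infty$, which uses $p>0$ --- itself coming from part (1) of Conjecture \ref{conj} when $m+1\le k-1$, not automatic. At the lower endpoint, $R_-=0$ presupposes $\sigma_\ell(\Lambda(a))>0$ at the exit point from $\Gamma_k$ (if $\sigma_\ell$ degenerated there simultaneously, the limit is indeterminate), and you also implicitly use $c>0$; finally the degenerate case $p=0$, which the paper handles by a special branch of its induction, sits outside your dichotomy. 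None of these looks fatal, but together with the transversality issue they constitute the substance of the proof rather than routine verifications.
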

\begin{rem}
    The numerical condition regarding $P_0$ can be explicitly written:
    \begin{itemize}
        \item If $m+1\le k-1$, the condition in Conjecture \ref{conj} \eqref{item: conj} implies $p>0$.
        \item If $m+1\le \ell$, the condition in Conjecture \ref{conj} \eqref{item: conj2} implies $$p^{k-m-1}{n\choose {k-m-1}}-\mu p^{\ell-m-1}{n\choose {\ell-m-1}}>0.$$
    \end{itemize}
\end{rem}

    Set $F_i(x,y)=G^{m-i,n,k-i}(x,y)-\mu G^{m-i,n,\ell-i}(x,y)$ for integers $i$ satisfying $0\le i \le m_0:=\min\{m,k-1\}$. When $\ell<i$, we define $G^{m-i,n,\ell-i}(x,y)=0$. We prove Proposition \ref{prop: near} by induction on $i$. We initially prove the following:
    \begin{lem}\label{lem: ini}
        Suppose the same assumptions as in Proposition \ref{prop: near}.
        \begin{enumerate}[font=\normalfont]
            \item If $m+1> k-1$, there exist a constant $\varepsilon$ and a function $y_{k-1}\in C^\infty([0,\varepsilon))$ such that $F_{k-1}(x,y_{k-1})=0$ and $y_{k-1}(0)=0$.
            \item If $m+1\le k-1$, there exist a constant $\varepsilon$ and a function $y_m\in C^\infty([0,\varepsilon))$ such that $F_m(x,y_m)=0$ and $y_m(0)=0$. Moreover, the function $y_m$ satisfies
            $\lambda_{0,n}(x,y_m),\check{\lambda}_{0,n}(x,y_m)\in\Gamma_{k-m-1}$ and $(\sigma_{k-m-1}-\mu\sigma_{l-m-1})(\check{\lambda}_{0,n}(x,y_m(x)))>0$ for $x\in[0,\varepsilon)$.
        \end{enumerate}
    \end{lem}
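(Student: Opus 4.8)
The plan is to exploit the special role of the index $i=m_0$: it is the smallest index reachable by the induction, and $F_{m_0}$ is precisely the reduction for which the equation becomes solvable at the singular point $x=0$. First I would record two identities, obtained by differentiating under the integral sign in the definition of $G$, namely
$$\partial_x G^{m',n,j}(x,y)=R^{m',n,j}(x,y),\qquad \partial_y G^{m',n,j}(x,y)=R^{m',n,j-1}(x,y),$$
where $R^{m',n,j}(x,y)$ denotes the coefficient of $t^{j}$ in $(x+yt)^{m'}(1+tp+x+yt)^{n}$. Evaluating at the origin gives $R^{m',n,j}(0,0)=\binom{n}{j}p^{j}$ when $m'=0$ and $R^{m',n,j}(0,0)=0$ when $m'\ge1$; in particular $F_i(0,0)=0$ for all $i$, while $\partial_y F_i(0,0)$ vanishes unless $m-i=0$. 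This dichotomy dictates the two cases.

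Case (1), $m\ge k-1$, so $m_0=k-1$. Here $F_{k-1}=G^{m-k+1,n,1}-\mu G^{m-k+1,n,\ell-k+1}$ has $k$-index $1$ and $\ell-k+1\le0$, hence is affine in $y$: write $F_{k-1}(x,y)=A(x)\,y+B(x)$ with $A(x)=x^{m-k+1}(1+x)^{n}$, the $y$-coefficient of $G^{m-k+1,n,1}$. Since $B$ is assembled from integrals $\int_0^x s^{m-k+1}(1+s)^{\bullet}\,ds$, it vanishes at $x=0$ to order at least $m-k+2$, while $A$ vanishes to order exactly $m-k+1$. I would therefore set $y_{k-1}:=-B/A$; Hadamard's lemma (quotient of a smooth function vanishing to high enough order by $x^{m-k+1}$ times the unit $(1+x)^n$) shows that $y_{k-1}$ extends smoothly to some $[0,\varepsilon)$ with $y_{k-1}(0)=0$, and $F_{k-1}(x,y_{k-1})\equiv0$ by construction. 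When $m=k-1$ one has $A(0)=1$ and this is immediate.

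Case (2), $m\le k-2$, so $m_0=m$. Now $F_m=G^{0,n,k-m}-\mu G^{0,n,\ell-m}$ is a polynomial with $F_m(0,0)=0$ and, by the identity above, $\partial_y F_m(0,0)=\binom{n}{k-m-1}p^{k-m-1}-\mu\binom{n}{\ell-m-1}p^{\ell-m-1}$. This equals $(\sigma_{k-m-1}-\mu\sigma_{\ell-m-1})(\check{\lambda}_{0,n})$ evaluated at $x=0$, and is positive by the $P_0$-numerical hypothesis recorded in the Remark (with $p>0$ supplied by the $\mathcal{K}_{k,\omega}$ part of the assumption). The implicit function theorem then produces a smooth $y_m$ on $[0,\varepsilon)$ with $y_m(0)=0$ and $F_m(x,y_m)\equiv0$.

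It remains to establish the three cone conditions, which I expect to be the main obstacle. Differentiating $F_m(x,y_m)\equiv0$ and using the Fang--Lai identity $\frac{d}{dx}G^{0,n,j}(x,y_m)=(1+x)^n\sigma_j(\lambda_{0,n}(x,y_m))$ yields the pointwise relation $\sigma_{k-m}(\lambda_{0,n})=\mu\,\sigma_{\ell-m}(\lambda_{0,n})$ along the solution. At $x=0$ the vector $\check{\lambda}_{0,n}$ has all entries equal to $p>0$, so it lies in $\Gamma_{k-m-1}$ and satisfies the last inequality by the computation of $\partial_y F_m(0,0)$ above. For $\lambda_{0,n}(0,y_m(0))=(y_m'(0),p,\dots,p)$ I would solve the relation for $w:=y_m'(0)$ and, setting $a_j:=\binom{n}{j}p^{j}$, reduce membership in $\Gamma_{k-m-1}$ to the single binding inequality $\sigma_{k-m-1}(w,p,\dots,p)>0$ (monotonicity of $j\mapsto\frac{n-j+1}{j}$ makes $j=k-m-1$ the worst case). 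Clearing the positive denominator $a_{k-m-1}-\mu a_{\ell-m-1}$, this reduces to
$$N:=\bigl(a_{k-m-1}^{2}-a_{k-m}a_{k-m-2}\bigr)+\mu\bigl(a_{\ell-m}a_{k-m-2}-a_{k-m-1}a_{\ell-m-1}\bigr)>0.$$
The first bracket is strictly positive by the strict log-concavity of the binomial coefficients (the equality case of the Newton inequality, Proposition \ref{prop: Newton}, since the entries of $\check{\lambda}_{0,n}$ are equal), and the second is nonnegative because, for a fixed index sum, $\binom{n}{a}\binom{n}{b}$ grows as $\{a,b\}$ becomes more balanced, and $\{\ell-m,k-m-2\}$ is no more spread than $\{k-m-1,\ell-m-1\}$. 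Hence $N>0$, so $\lambda_{0,n}(0,y_m(0))\in\Gamma_{k-m-1}$, and continuity of the $\sigma_j$ propagates all three strict conditions to a possibly smaller interval $[0,\varepsilon)$. The technical heart is thus this elementary-symmetric-function positivity together with the bookkeeping that identifies the $P_0$-condition with $\partial_y F_m(0,0)>0$.
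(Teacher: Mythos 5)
Your proposal is correct and takes essentially the same route as the paper: case (1) is the paper's linear ODE solved in integrated, affine-in-$y$ form (your $y_{k-1}=-B/A$ is exactly what the ``standard technique of linear ODEs'' produces), and case (2) is the paper's implicit function theorem argument at $(0,0)$, with $\partial_y F_m(0,0)>0$ identified with the $P_0$-condition, followed by checking the cone conditions at $x=0$ using $y_m'(0)$ from the differentiated equation and propagating by continuity. Your binding-constraint reduction to $j=k-m-1$ and the cleared-denominator inequality $N>0$ (strict Newton/log-concavity of binomials at the all-$p$ vector) are just an explicit rewriting of the paper's chain $\bigl(\tfrac{\sigma_{k-m}-\mu\sigma_{\ell-m}}{\sigma_{k-m-1}-\mu\sigma_{\ell-m-1}}\bigr)(\lambda)<\bigl(\tfrac{\sigma_{k-m}}{\sigma_{k-m-1}}\bigr)(\lambda)<\bigl(\tfrac{\sigma_i}{\sigma_{i-1}}\bigr)(\lambda)$.
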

    \begin{proof}[Proof of Lemma \ref{lem: ini}]
            \begin{enumerate}[wide,labelindent = 0pt, labelwidth = ! ]
            \item As stated in the previous section, 
            the equation $F_{k-1}(x,y)=0$ with the boundary condition $y(0)=0$ for a function $y=y(x)$ is equivalent to the following Dirichlet problem
            \begin{equation*}
                \begin{cases}
                    \sigma_1(\lambda_{m-k+1,n}(x,y))=\mu \sigma_{\ell-k+1}(\lambda_{m-k+1,n}(x,y)),
                    \\y(0)=0.
                \end{cases}
            \end{equation*}
            Note that the right-hand side is $\mu$ if $\ell=k-1$, and is $0$ if $\ell-k+1<0$ since we defined $G^{m-k+1,n,\ell-k+1}=0$ if $\ell-k+1<0$. Therefore the Dirichlet problem above is equivalent to 
            \begin{equation*}
                \begin{cases}
                    y'+\left(\frac{m-k+1}{x}+\frac{n}{1+x}\right)y+\frac{np}{1+x}-\mu\delta_{(k-1),\ell}=0\\
                    y(0)=0,
                \end{cases}
            \end{equation*}
            where $\delta_{i,j}$ is the Kronecker delta. By the standard technique of linear ODEs, this Dirichlet problem is solvable.\mbox{}
            \item Note that in general
            \begin{align*}
                \frac{d}{dy}G^{m,n,k}(x,y)
                &=\frac{1}{k!}\frac{d^k}{dt^k}\Big|_{t=0}x^m(1+x)^n\left(1+t\frac{y}{x}\right)^m\left(1+t\frac{p+y}{1+x}\right)^n t\\
                &=\frac{1}{(k-1)!}\frac{d^{k-1}}{dt^{k-1}}\Big|_{t=0}x^m(1+x)^n\left(1+t\frac{y}{x}\right)^m\left(1+t\frac{p+y}{1+x}\right)^n \\
                &=x^m(1+x)^n\sigma_{k-1}(\check{\lambda}_{m,n}(x,y)).
            \end{align*}
            Thus, we have
            \begin{equation*}
                \frac{d}{dy}F_m(x,y)=(1+x)^n\Bigl(\bigl(\sigma_{k-m-1}-\mu\sigma_{\ell-m-1}\bigr)(\check{\lambda}_{0,n}(x,y))\Bigr).
            \end{equation*}
            By assumption, we see that
            \begin{equation}\label{eq: ini}
                \frac{d}{dy}F_m(0,0)=p^{k-m-1}{n\choose{k-m-1}}-\mu p^{\ell-m-1}{n\choose{\ell-m-1}}>0.
            \end{equation}
            Hence, the implicit function theorem implies that there exists the solution of $F_m(x,y)=0$ near $(0,0)$. Since $p>0$ by assumption and $$\check{\lambda}_{0,n}(0,y_m(0))=(\underbrace{p,\dots,p}_{n\text{-times}}),$$ we have $\check{\lambda}_{0,n}(x,y_m(x))\in\Gamma_{k-m-1}$ for $x$ such that $0\le x\ll1$. The last condition of the statement also holds for $x$ such that $0\le x\ll1$ by \eqref{eq: ini}. To prove $\lambda_{0,n}(x,y_m(x))\in\Gamma_{k-m-1}$, note that
            \begin{align*}
                \sigma_i(\lambda_{0,n}(x,y_m(x)))
                =&-y'_m(x)\sigma_{i-1}(\check{\lambda}_{0,n}(x,y_m))+\sigma_i(\check{\lambda}_{0,n}(x,y_m))\\
                =&\left(-\frac{\sigma_{k-m}-\mu\sigma_{\ell-m}}{\sigma_{k-m-1}-\mu\sigma_{\ell-m-1}}\sigma_{i-1}+\sigma_i\right)(\check{\lambda}_{0,n}(x,y_m)),
            \end{align*}
        where the last equality follows from $(F_m(x,y_m(x))'=0$. 
        From the Newton inequality and the assumption $p>0$, we have
        \begin{equation*}
            \left(\frac{\sigma_{k-m}-\mu\sigma_{\ell-m}}{\sigma_{k-m-1}-\mu\sigma_{\ell-m-1}}\right)(\lambda)<\left(\frac{\sigma_{k-m}}{\sigma_{k-m-1}}\right)(\lambda)<\left(\frac{\sigma_i}{\sigma_{i-1}}\right)(\lambda)
        \end{equation*}
        for $\lambda:=\check{\lambda}_{0,n}(0,y_m(0))$ and $i$ such that $0\le i \le k-m-1$.
        Applying this inequality, we see that $\sigma_i(\lambda_{0,n}(0,y_m(0))>0$. Thus, if $x$ is small enough, we have $\lambda_{0,n}(0,y_m(0))\in\Gamma_{k-m-1}$.\qedhere
        \end{enumerate}
    \end{proof}
    We next clarify that induction works. First, we remark that if $p=0$ and $\ell-i+1<0$, the constant function $y_{i-1}(x)=0$ obviously satisfies $F_{i-1}(x,y_{i-1})=0$. Then, for the step $i-1=\ell$, by the same arguments as below, we get the function $y_\ell$ such that $F_\ell(x,y_\ell)=0$ and $y_\ell(0)=0$. This function $y_\ell$ also satisfies $y'_\ell(0)>0$, and therefore the conditions
    \begin{equation*}
        \lambda_{m-\ell,n}(x,y_\ell),\ \check{\lambda}_{m-\ell,n}(x,y_\ell)\in\Gamma_{k-i}.
    \end{equation*}
    hold for $x\in[0,\varepsilon)$ and some $\varepsilon>0$. After this step, we can run the same induction below.
    For the case $p\neq0$, the following lemma claims that induction works:
    
    \begin{lem}\label{lem: ind}
        Assume $p\neq0$. Suppose there exists a function $y_i(x)\in C^\infty([0,\varepsilon))$ for some $\varepsilon>0$ such that 
        \begin{align}
                &F_i(x,y_i)=0, \quad y_i(0)=0,\notag \\
                &\lambda_{m-i,n}(x,y_i), \ \check{\lambda}_{m-i,n}(x,y_i) \in\Gamma_{k-i-1} \ \text{ for }\ x\in[0,\varepsilon),\notag\\
                &\label{eq: ass}(\sigma_{k-i-1}-\mu\sigma_{\ell-i-1})(\check{\lambda}_{m-i,n}(0,y_i(0)))>0 \ \text{ if }\ \ell-i-1\ge0.
        \end{align}
        Then, there exists a function $y_{i-1}\in C^\infty([0,\varepsilon'))$ for some $\varepsilon'>0$ such that 
        \begin{align}
                &F_{i-1}(x,y_{i-1})=0, \quad y_i(0)=0,\notag\\
                &\lambda_{m-i+1,n}(x,y_{i-1}), \ \check{\lambda}_{m-i+1,n}(x,y_{i-1}) \in\Gamma_{k-i-1}
                \ \text{ for }\ x\in[0,\varepsilon'),\notag\\
                &\label{eq: con}(\sigma_{k-i}-\mu\sigma_{\ell-i})(\check{\lambda}_{m-i+1,n}(0,y_i(0)))>0 \ \text{ if }\ \ell-i\ge0.
        \end{align}
        \end{lem}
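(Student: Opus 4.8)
The plan is to solve $F_{i-1}(x,y)=0$ with $y(0)=0$ near $x=0$, but since $\partial_y F_{i-1}(0,0)=0$ (because $\partial_y G^{m-i+1,n,k-i+1}=x^{m-i+1}(1+x)^n\sigma_{k-i}(\check{\lambda}_{m-i+1,n})$ and $m-i+1\ge 1$ throughout the induction), the implicit function theorem in $y$ fails directly. To bypass this I would substitute $y=xw$ and rescale $s=x\tau$ in the defining integral of $G$ to extract the common singular factor: for the current value of $i$,
\[
G^{m-i+1,n,k}(x,xw)=x^{m-i+2}\,g_k(x,w),\quad g_k(x,w):=\frac{1}{k!}\frac{d^{k}}{dt^{k}}\Big|_{t=0}\int_{0}^{1+tw}\tau^{m-i+1}(1+tp+x\tau)^{n}\,d\tau,
\]
where $g_k$ is a polynomial in $(x,w)$. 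Hence $F_{i-1}(x,xw)=x^{m-i+2}\tilde F(x,w)$ with $\tilde F:=g_{k-i+1}-\mu\,g_{\ell-i+1}$ smooth, and for $x>0$ the equation $F_{i-1}=0$ is equivalent to $\tilde F(x,w)=0$, to which the implicit function theorem can now be applied at $x=0$.

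Next I would locate the correct initial slope $w_0=y_{i-1}'(0)$ as a root of the indicial polynomial $\tilde F(0,w)$. A direct computation identifies $\tilde F(0,w)=0$ with the level-$(i-1)$ equation evaluated at $x=0$, namely $(\sigma_{k-i+1}-\mu\sigma_{\ell-i+1})(\lambda_{m-i+1,n}(0,0))=0$ with $y_{i-1}'(0)=w$. Restricting to the ray of vectors $(\underbrace{w,\dots,w}_{m-i+2},\underbrace{p,\dots,p}_{n})$ lying in the appropriate admissible cone and using the monotonicity of the relevant normalized quotient (Proposition \ref{prop: mono}), the difference $\sigma_{k-i+1}-\mu\sigma_{\ell-i+1}$ changes sign in a controlled, strictly monotone fashion, so it has a unique admissible root $w_0$, which is therefore simple, i.e. $\partial_w\tilde F(0,w_0)\ne 0$. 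The implicit function theorem then produces a smooth $w(x)$ with $w(0)=w_0$, and $y_{i-1}(x):=x\,w(x)\in C^{\infty}([0,\varepsilon'))$ solves $F_{i-1}(x,y_{i-1})=0$ with $y_{i-1}(0)=0$; the standing assumption $p\ne 0$ guarantees that this ray meets the admissible cone.

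I would then verify the cone memberships and the positivity \eqref{eq: con}, following the proof of Lemma \ref{lem: ini} closely. Differentiating $F_{i-1}(x,y_{i-1})=0$ and dividing by $x^{m-i+1}(1+x)^{n}$ yields
\[
y_{i-1}'=-\frac{(\sigma_{k-i+1}-\mu\sigma_{\ell-i+1})(\check{\lambda}_{m-i+1,n})}{(\sigma_{k-i}-\mu\sigma_{\ell-i})(\check{\lambda}_{m-i+1,n})},
\]
so that, substituting this into $\sigma_j(\lambda_{m-i+1,n})=\sigma_j(\check{\lambda}_{m-i+1,n})+y_{i-1}'\,\sigma_{j-1}(\check{\lambda}_{m-i+1,n})$,
\[
\sigma_j(\lambda_{m-i+1,n})=\Big(\sigma_j-\tfrac{\sigma_{k-i+1}-\mu\sigma_{\ell-i+1}}{\sigma_{k-i}-\mu\sigma_{\ell-i}}\,\sigma_{j-1}\Big)(\check{\lambda}_{m-i+1,n}).
\]
Evaluating at $x=0$, where $\check{\lambda}_{m-i+1,n}(0,0)=(\underbrace{w_0,\dots,w_0}_{m-i+1},\underbrace{p,\dots,p}_{n})$, I would first establish \eqref{eq: con} by expanding the indicial identity after appending one coordinate $w_0$, thereby showing the denominator $(\sigma_{k-i}-\mu\sigma_{\ell-i})(\check{\lambda}_{m-i+1,n}(0,0))$ is positive. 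The Newton inequality (Proposition \ref{prop: Newton}) then gives $\tfrac{\sigma_{k-i+1}-\mu\sigma_{\ell-i+1}}{\sigma_{k-i}-\mu\sigma_{\ell-i}}<\tfrac{\sigma_j}{\sigma_{j-1}}$ for $j\le k-i-1$, whence $\sigma_j(\lambda_{m-i+1,n}(0,0))>0$ and both $\lambda_{m-i+1,n}(0,0)$ and $\check{\lambda}_{m-i+1,n}(0,0)$ lie in $\Gamma_{k-i-1}$. As all these are strict open conditions, they persist on $[0,\varepsilon')$ after shrinking $\varepsilon'$.

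The hard part will be the sign analysis locating $w_0$ and, above all, proving the positivity \eqref{eq: con}: the sign of $(\sigma_{k-i}-\mu\sigma_{\ell-i})(\check{\lambda}_{m-i+1,n}(0,0))$ is not automatic, because $w_0$ may be negative so that the relevant vector need not lie in the positive cone. This is precisely where the inductive hypothesis \eqref{eq: ass} must be used: it constrains the admissible range of $\mu$ at level $i$ and hence forces the root $w_0$ onto the correct side of the zero locus of $\sigma_{k-i}-\mu\sigma_{\ell-i}$, while the careful bookkeeping of which Newton/monotonicity estimates survive when some coordinates are negative is the delicate point. The degenerate case $\ell-i<0$, where the $\mu$-terms drop out, should be recorded separately, and the case $p=0$ is already dispatched by the remark preceding the lemma.
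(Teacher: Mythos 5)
Your blow-up reduction is computationally correct: substituting $s=x\tau$ does give $G^{m-i+1,n,j}(x,xw)=x^{m-i+2}g_j(x,w)$ with $g_j$ polynomial, and at $x=0$ one finds $\tilde F(0,w)=\tfrac{1}{m-i+2}\,(\sigma_{k-i+1}-\mu\sigma_{\ell-i+1})\bigl(\underbrace{w,\dots,w}_{m-i+2},\underbrace{p,\dots,p}_{n}\bigr)=:\tfrac{1}{m-i+2}\phi(w)$, so your indicial equation is the right one; this organization would even make the smoothness of $y_{i-1}$ up to $x=0$ automatic, a point the paper must argue separately. (For contrast, the paper works with $F_{i-1}(x,y)$ directly for $x>0$: it proves $F_{i-1}(x,y_i(x))<0$ and $D_y^2F_{i-1}>0$ for $y\ge y_i(x)$, extracts the root $y_{i-1}(x)>y_i(x)$, and only then deduces $y_{i-1}(0)=0$, \eqref{eq: con}, and the cone conditions.) However, everything you defer to ``the hard part'' is the actual content of the lemma, and the two mechanisms you sketch for it fail as stated. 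Existence of the root: you never show $\tilde F(0,\cdot)$ changes sign, and your appeal to strict monotonicity along the ray is false near the relevant point. Indeed $\phi'(w)=(m-i+2)(\sigma_{k-i}-\mu\sigma_{\ell-i})\bigl(\underbrace{w,\dots,w}_{m-i+1},\underbrace{p,\dots,p}_{n}\bigr)$, and differentiating $F_i(x,y_i(x))=0$ and letting $x\to0$ gives $(\sigma_{k-i}-\mu\sigma_{\ell-i})(\lambda_{m-i,n}(0,y_i(0)))=0$, i.e.\ $\phi'(y_i'(0))=0$: the restriction of $\sigma_{k-i+1}-\mu\sigma_{\ell-i+1}$ to your ray has a critical point exactly at the natural starting slope $w=y_i'(0)$, so it is not ``strictly monotone'' there; moreover Proposition \ref{prop: mono} applies only inside the cones $\Gamma_j$, which the ray points are not yet known to enter. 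What is actually needed is the paper's key claim $(\sigma_{k-i+1}-\mu\sigma_{\ell-i+1})(\lambda_{m-i,n}(0,y_i(0)))<0$, equivalently $\phi(y_i'(0))<0$, proved by a contradiction argument combining the Newton inequality, the vanishing above, hypothesis \eqref{eq: ass}, and $p\neq0$; together with positivity of the leading coefficient of $\phi$ (the $P_0$-condition \eqref{eq: ini} when $k>m+1$) and a convexity statement $\phi''>0$ on $[\,y_i'(0),\infty)$, which is the $x\to0$ shadow of the paper's computation that $D_y^sF_{i-1}>0$ for $s\ge2$ and again consumes \eqref{eq: ass} and Newton through the append-one-coordinate expansions.

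Your plan for \eqref{eq: con} is, in addition, circular: simplicity of the root, $\partial_w\tilde F(0,w_0)\neq0$, \emph{is} the inequality $(\sigma_{k-i}-\mu\sigma_{\ell-i})(\check{\lambda}_{m-i+1,n}(0,0))>0$ of \eqref{eq: con}, so you cannot first declare $w_0$ simple ``by monotonicity'' and then establish \eqref{eq: con} ``by expanding the indicial identity'' --- that expansion only rewrites $\phi(w_0)=0$ as $w_0\,(\sigma_{k-i}-\mu\sigma_{\ell-i})(\check{\lambda})+(\sigma_{k-i+1}-\mu\sigma_{\ell-i+1})(\check{\lambda})=0$ and determines no sign, since $w_0$ may be negative. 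The correct logic runs the other way: from $\phi(y_i'(0))<0$, $\phi'(y_i'(0))=0$, $\phi''>0$ above $y_i'(0)$, and $\phi\to+\infty$, one gets a unique root $w_0>y_i'(0)$ with $\phi'(w_0)>0$, which \emph{is} \eqref{eq: con}; after that your third paragraph (the Lemma \ref{lem: ini}-style verification of $\lambda_{m-i+1,n},\check{\lambda}_{m-i+1,n}\in\Gamma_{k-i-1}$ via the Newton quotient chain) does go through. So the skeleton (blow-up plus implicit function theorem at $x=0$) is viable and would reprise the paper's symmetric-function estimates in one-variable form, but as written the proposal omits precisely those estimates --- the sign of $\phi$ at $y_i'(0)$, the convexity, and the noncircular derivation of \eqref{eq: con} --- and its stated substitutes are incorrect.
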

    \begin{proof}
       First, we prove that $F_{i-1}(x,y_i(x))<0$ near $x=0$. Note that
        \begin{align*}
            &\frac{d}{dx}F_{i-1}(x,y_i(x))\\
            =&x^{m-i+1}(1+x)^n\Bigl(\bigl(\sigma_{k-i+1}-\mu\sigma_{\ell-i+1}\bigr)(\lambda_{m-i+1,n}(x,y_i))\Bigr)\\
            =&x^{m-i+1}(1+x)^n\biggl(\Bigl(y'_i(\sigma_{k-i}-\mu\sigma_{\ell-i})
            +(\sigma_{k-i+1}-\mu\sigma_{\ell-i+1})\Bigr)(\check{\lambda}_{m-i+1,n}(x,y_i))
            \biggr).
        \end{align*}
        As $x\to0$, by $(F_i(x,y_i(x)))'=0$, we get
        $\left(\sigma_{k-i}-\mu\sigma_{\ell-i}\right)(\lambda_{m-i,n}(0,y_i(0)))=0.$
        We further claim that
        $(\sigma_{k-i+1}-\mu\sigma_{\ell-i+1})(\lambda_{m-i,n}(0,y_i(0)))<0.$ (This inequality does not hold if $p=0$ and $\ell-i+1<0$.)
        Indeed, suppose not, namely, the left-hand side is nonnegative. Then, if $\ell-i+1<0$, we have $\sigma_{k-i+1}(\lambda_{m-i,n}(0,y_i(0)))\ge0$. On the other hand, by the definition of $y_i$, we have $\sigma_{k-i}(\lambda_{m-i,n}(0,y_i(0)))=0$. 
        By the assumption of induction and this inequality, we get $\sigma_{k-i+1}(\lambda_{m-i,n}(0,y_i(0)))=0$, and since the equality in the Newton inequality holds, we have $p=y'_i(0)$. However, this conclusion contradicts to $\sigma_{k-i}(\lambda_{m-i,n}(0,y_i(0)))=0$ since $p\neq0$. Thus, we get $(\sigma_{k-i+1}-\mu\sigma_{\ell-i+1})(\lambda_{m-i,n}(0,y_i(0)))<0$. 
        If $\ell-i+1\ge0$, by assumption, we have $(\sigma_{k-i+1}-\mu\sigma_{\ell-i+1})(\lambda_{m-i,n}(0,y_i(0)))\ge0$. In particular, we have $\sigma_{k-i+1}(\lambda_{m-i,n}(0,y_i(0)))>0$ and therefore $\lambda_{m-i,n}(0,y_i(0))\in\Gamma_{k-i+1}$. On the other hand, by the definition of $y_i$, we have $(\sigma_{k-i}-\mu\sigma_{\ell-i})(\lambda_{m-i,n}(0,y_i(0)))=0$. Since $\lambda_{m-i,n}(0,y_i(0))\in\Gamma_{k-i+1}$, by the Newton inequality, we get 
        \begin{align*}
            \mu
            \le\left(\frac{\sigma_{k-i+1}}{\sigma_{\ell-i+1}}\right)(\lambda_{m-i,n}(0,y_i(0)))
            <\left(\frac{\sigma_{k-i}}{\sigma_{\ell-i}}\right)(\lambda_{m-i,n}(0,y_i(0)))=\mu,
        \end{align*}
        which is a contradiction. Therefore, we get $(\sigma_{k-i+1}-\mu\sigma_{\ell-i+1})(\lambda_{m-i,n}(0,y_i(0)))<0$. As a result, we have $(F_{i-1}(x,y_i(x)))<0$ near $x=0$. Next, we prove that $(D_y^2 F_{i-1})(x,y)>0$ for $y\ge y_i(x)$ and $x\neq0$ sufficiently small, where $D_y$ denotes the derivation with respect to $y$. Let $\ell_0:=\max\{2,\ell-i+2\}$. Then, we have
        \begin{align*}
             &(D_y^{\ell_0}F_{i-1})(x,y)\\
            =&D_y^{\ell_0-1}\Bigl(x^{m-i+1}(1+x)^n \bigl(\sigma_{k-i}-\mu\sigma_{\ell-i}\bigr)(\check{\lambda}_{m-i+1,n}(x,y))
            \Bigr)\\
            =&\sum_{r=\max\{0,-n+\ell_0-1\}}^{\min\{\ell_0-1,m-i+1\}}C_r x^{m-i+1-r}(1+x)^{n-\ell_0+1+r}
            \sigma_{(k-i)-(\ell_0-1)}(\check{\lambda}_{m-i+1-r,n-\ell_0+1+r}(x,y)),
        \end{align*}
        where $C_r$'s are some positive constants.
        Since $\check{\lambda}_{m-i,n}(x,y_i)\in \Gamma_{k-i-1}$, by using the monotonicity of $\sigma_s$ in $\Gamma_s$, the terms corresponding to $r\neq 0$ are positive for any $y\ge y_i(x)$. For the term $r=0$, we have
        \begin{align*}
            &\sigma_{(k-i)-(\ell_0-1)}(\check{\lambda}_{m-i+1,n-\ell_0+1}(x,y))\\
            \ge&\sigma_{(k-i)-(\ell_0-1)-1}(\check{\lambda}_{m-i,n-\ell_0+1}(x,y))
            \left(\frac{y_i}{x}+\left(\frac{\sigma_{(k-i)-(\ell_0-1)}}{\sigma_{(k-i)-(\ell_0-1)-1}}\right)(\check{\lambda}_{m-i,n-\ell_0+1}(x,y_i))\right),
        \end{align*}
        where we used the assumption $\check{\lambda}_{m-i,n}(x,y_i)\in\Gamma_{k-i-1}$, the monotonicity of $\sigma_{r}/\sigma_{r-1}$ in $\Gamma_r$, and $y\ge y_i(x)$. Using $(F_i(x,y_i))'=0$, the latter part of the right hand side converges to
        \begin{equation*}
            -\left(\frac{\sigma_{k-i}-\mu\sigma_{\ell-i}}{\sigma_{k-i-1}-\mu\sigma_{\ell-i-1}}\right)(\check{\lambda}_{m-i,n}(0,y_i(0)))
            +\left(\frac{\sigma_{(k-i)-(\ell_0-1)}}{\sigma_{(k-i)-(\ell_0-1)-1}}\right)(\check{\lambda}_{m-i,n-\ell_0+1}(0,y_i(0))).
        \end{equation*}
        We claim that this quantity is always positive. Indeed, if $\ell-i<0$, we have $\ell_0=2$, and
        \begin{align*}
            &\sigma_{k-i-1}(\check{\lambda}_{m-i,n})\sigma_{k-i-1}(\check{\lambda}_{m-i,n-1})-\sigma_{k-i}(\check{\lambda}_{m-i,n})\sigma_{k-i-2}(\check{\lambda}_{m-i,n-1})\\
            =&\Bigl(p\sigma_{k-i-2}(\check{\lambda}_{m-i,n-1})+\sigma_{k-i-1}(\check{\lambda}_{m-i,n-1})\Bigr)\sigma_{k-i-1}(\check{\lambda}_{m-i,n-1})\\
            &\quad-\Bigl(p\sigma_{k-i-1}(\check{\lambda}_{m-i,n-1})+\sigma_{k-i}(\check{\lambda}_{m-i,n-1})\Bigr)\sigma_{k-i-2}(\check{\lambda}_{m-i,n-1})\\
            =&\Bigl(\sigma^2_{k-i-1}-\sigma_{k-i}\sigma_{k-i-2}\Bigr)(\check{\lambda}_{m-i,n-1})>0,
        \end{align*}
        where we omit $(0,y_i(0))$ for simplicity and the last inequality is the Newton inequality. Since $\check{\lambda}_{m-i,n}(0,y_i(0))\in\Gamma_{k-i-1}$, from this inequality, we see that the quantity is positive. If $\ell-i\ge0$, by the Newton inequality and the assumptions $\check{\lambda}_{m-i,n}(0,y_i(0))\in\Gamma_{k-i-1}$ and \eqref{eq: ass}, we see that
        \begin{equation*}
            \left(\frac{\sigma_{k-i}-\mu\sigma_{\ell-i}}{\sigma_{k-i-1}-\mu\sigma_{\ell-i-1}}\right)(\check{\lambda}_{m-i,n}(0,y_i(0)))
            <\left(\frac{\sigma_{k-i}}{\sigma_{k-i-1}}\right)(\check{\lambda}_{m-i,n}(0,y_i(0))).
        \end{equation*}
        Then, we see that the quantity is positive by repeatedly applying the same calculation as in the case $\ell-i<0$. As a result, we have proved that $D^{\ell_0}_y F_{i-1}(x,y)$ is positive for $x$ and $y$ such that $0\neq x\ll1$ and $y\ge y_i(x)$. On the other hand, for $s\ge2$, we have
        \begin{align*}
            (D^s_y F_{i-1})(x,y_i(x))
            =&\sum_{r=\max\{0,-n+s-1\}}^{\min\{s-1,m-i+1\}}C_r x^{m-i+1-r}(1+x)^{n-s+1+r}\\
            &\qquad\times\Bigl(\sigma_{(k-i)-(s-1)}-\mu\sigma_{(\ell-i)-(s-1)}\Bigr)(\check{\lambda}_{m-i+1-r,n-s+1+r}(x,y_i)).
        \end{align*}
        Since $\check{\lambda}_{m-i,n}\in\Gamma_{k-i-1}$ and \eqref{eq: ass} holds by assumption, from the Newton inequality, we see that the terms corresponding to $r\neq0$ are positive. For the term corresponding to $r=0$, note that $$\lambda_{m-i,n-s+1}(0,y_i(0))=\check{\lambda}_{m-i+1,n-s+1}(0,y_i(0))$$
        by definition and $$\Bigl(\sigma_{k-i}-\mu\sigma_{\ell-i}\Bigr)(\lambda_{m-i,n}((0,y_i(0))=0.$$
        Since $\sigma_r/\sigma_s$ is strictly increasing in $\Gamma_r$, where $r$ and $s$ satisfy $r>s$, by the assumption $\lambda_{m-i,n}(0,y_i(0))\in\Gamma_{k-i-1}$, taking some components to infinity, we get
        $$\left(\frac{\sigma_{(k-i)-(s-1)}}{\sigma_{(\ell-i)-(s-1)}}\right)(\lambda_{m-i,n-s+1}(0,y_i(0)))<\left(\frac{\sigma_{k-i}}{\sigma_{\ell-i}}\right)(\lambda_{m-i,n}(0,y_i(0)))=\mu.$$
        Therefore, for $x$ small enough, the term corresponding to $r=0$ is also positive. As a result, we proved that $(D^s_y F_{i-1})(x,y_i(x))>0$ for $x$ small enough. Combining two results $(D^{\ell_0}_y F_{i-1}) (x,y)>0$ for $y\ge y_i(x)$ and $(D^s_y F_{i-1})(x,y_i(x))>0$ for $s\ge2$, we have $(D^2_y F_{i-1})(x,y)>0$ for $y\ge y_i(x)$. Combining this result with $F_{i-1}(x,y_i(x))<0$, we obtain a function $y_{i-1}(x)$, which satisfies $y_{i-1}(x)> y_i(x)$, for any $x>0$ small enough. Moreover, since $y_i(0)=0$, we have $y_{i-1}(0)\ge0$. On the other hand, since $i \le m$, for any $y>0$, if $x>0$ is small enough, 
        \begin{equation*}
            (D_xF_{i-1})(x,y)=x^{m-i+1}(1+x)^n\left(\sigma_{k-i+1}-\mu\sigma_{\ell-i+1}\right)(\check{\lambda}_{m-i+1,n}(x,y))>0.
        \end{equation*}
        Hence, we have $y_{i-1}(0)=0$.
        Since $y_{i-1}(x)>y_i(x)$, we get $y'_{i-1}(0)\ge y'_i(0)$. Since $y'_i(0)$ satisfies 
        $$\Bigl(\sigma_{k-i+1}-\mu\sigma_{l-i+1}\Bigr)(\lambda_{m-i+1,n}(0,y_i(0))<0$$
        as we proved in the proof of the first claim, we must have $y'_{i-1}(0)>y'_i(0)$. In particular, since 
        $$\Bigl(\sigma_{k-i}-\mu\sigma_{\ell-i}\Bigr)(\lambda_{m-i,n}(0,y_i(0))=0,$$
        condition $\eqref{eq: con}$ holds by using the strict monotonicity of $\sigma_{k-i}/\sigma_{\ell-i}$. The condition $\lambda_{m-i+1,n}(0,y_{i-1}(0))\in\Gamma_{k-i}$ follows from the same arguments as in the proof of Lemma \ref{lem: ini}. Therefore, we have $(D_y F_{i-1})(x,y_{i-1})>0$ for $x>0$ sufficiently small, and the implicit function theorem thus implies the smoothness of $y_{i-1}$ in $(0,\varepsilon')$. Condition \eqref{eq: con} also implies that higher derivatives of $y_{i-1}$ at $0$ do not diverge, namely $y_{i-1}\in C^\infty([0,\varepsilon')$. The condition $\check{\lambda}_{m-i+1,n}(0,y_{i-1}(0))\in\Gamma_{k-i}$ follows from $\lambda_{m-i,n}(0,y_i(0))\in\Gamma_{k-i-1}$ and \eqref{eq: con}.
    \end{proof}
    In conclusion, induction works by Lemmas \ref{lem: ini} and \ref{lem: ind}, and eventually, we get a solution $y=y_0(x)$ of \eqref{eq: poly} near $x=0$ (without the condition $y(b)=q$) with condition \eqref{eq: gamma_k}. Thus, we have proved Proposition \ref{prop: near}.

Next, we prove that the solution can be extended arbitrarily longer with condition \eqref{eq: gamma_k}. 
Let $y(x)$ be the solution near $x=0$ we obtained by Proposition \ref{prop: near}. Let us define
$$T:=\sup\left\{ t\in[0,b]\ \middle\vert 
\begin{array}{l} y(x) \text{ can be extended to a solution of \eqref{eq: poly} on $[0,t]$ }\\
\text{(without the condition $y(b)=q$) with condition \eqref{eq: gamma_k}} 
\end{array}\right\}.
$$

\begin{prop}\label{prop: interval}
     $T=b$.
\end{prop}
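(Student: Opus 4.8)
The plan is to run a continuation argument in the variable $x$, splitting it into an openness and a closedness step. Write $N:=m+n+1$ and $F_0(x,y):=G^{m,n,k}(x,y)-\mu G^{m,n,\ell}(x,y)$, and recall from the computation in the proof of Lemma \ref{lem: ini} that
\[
\partial_y F_0(x,y)=x^m(1+x)^n\bigl(\sigma_{k-1}-\mu\sigma_{\ell-1}\bigr)\bigl(\check{\lambda}_{m,n}(x,y)\bigr).
\]
Differentiating $F_0(x,y(x))=0$ and using the analogous formula $\partial_xF_0=x^m(1+x)^n(\sigma_k-\mu\sigma_\ell)(\check{\lambda}_{m,n})$, the solution satisfies the first-order ODE
\[
y'=-\frac{(\sigma_k-\mu\sigma_\ell)\bigl(\check{\lambda}_{m,n}(x,y)\bigr)}{(\sigma_{k-1}-\mu\sigma_{\ell-1})\bigl(\check{\lambda}_{m,n}(x,y)\bigr)}.
\]
Since $T>0$ by Proposition \ref{prop: near} and the set of admissible endpoints is an interval, it suffices to show that the solution extends past every $x_0<b$ as long as it remains in $\Gamma_k$.

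The algebraic fact that makes the continuation regular is the strict positivity of the denominator: whenever $\lambda:=\lambda_{m,n}(x,y)\in\Gamma_k$ with $x>0$, I claim $(\sigma_{k-1}-\mu\sigma_{\ell-1})(\check{\lambda}_{m,n})>0$, so that $\partial_yF_0>0$. Indeed, on the solution $\sigma_k(\lambda)=\mu\sigma_\ell(\lambda)$ with $\lambda\in\Gamma_k\subseteq\Gamma_\ell$, whence $\sigma_\ell(\lambda)>0$ and $\mu>0$. Writing $\lambda=(\check{\lambda},t)$ with $t=y'$ and varying $t$ (which keeps $\lambda$ in $\Gamma_k$), the second part of Proposition \ref{prop: mono} says that $\bigl(\tfrac{\sigma_k/\binom{N}{k}}{\sigma_\ell/\binom{N}{\ell}}\bigr)^{1/(k-\ell)}$ is strictly increasing in $t$; since $\partial_t\sigma_j(\lambda)=\sigma_{j-1}(\check{\lambda})$, this reads $\sigma_{k-1}(\check{\lambda})\,\sigma_\ell(\lambda)>\sigma_k(\lambda)\,\sigma_{\ell-1}(\check{\lambda})$, and substituting $\sigma_k(\lambda)=\mu\sigma_\ell(\lambda)$ and dividing by $\sigma_\ell(\lambda)>0$ yields exactly $(\sigma_{k-1}-\mu\sigma_{\ell-1})(\check{\lambda})>0$. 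Therefore $\{F_0=0\}$ is a smooth graph $y=y(x)$ near every solution point with $\lambda\in\Gamma_k$, and the ODE above is regular on $\{\lambda_{m,n}\in\Gamma_k,\ x>0\}$. This gives openness: if the solution reaches $x_0<b$ with $\lambda_{m,n}(x_0,y(x_0))\in\Gamma_k$, the implicit function theorem extends it beyond $x_0$.

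It remains to prove closedness: assuming the solution exists on $[0,T)$ with $T<b$, I must show that $y$ extends continuously to $x=T$ with $\lambda_{m,n}(T,y(T))\in\Gamma_k$. Two a priori estimates are needed. First, a $C^0$ bound on $y$: on any interval $[\delta,T]$ the entries $\tfrac{y}{x}$ and $\tfrac{p+y}{1+x}$ of $\check{\lambda}_{m,n}$ are comparable to $y$, and in the ODE above the numerator (of degree $k$ in these entries) outgrows the denominator (of degree $k-1$) because $k>\ell$; hence for $y$ large and positive $y'$ becomes large and negative, and symmetrically for $y$ large and negative, a restoring effect that together with $y(0)=0$ confines $y$ to a bounded interval on $[0,T)$. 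Consequently $\check{\lambda}_{m,n}$ converges as $x\to T$. Second---and this is where I expect the real work---one must rule out that the limiting eigenvalue vector lies on $\partial\Gamma_k$. Since $\sigma_k=\mu\sigma_\ell$ with $\mu>0$, iterating the Newton inequality (Proposition \ref{prop: Newton}) in $\bar\Gamma_k$ shows that a boundary limit would force $\sigma_j=0$ simultaneously for all $\ell\le j\le k$; I would exclude this for $x<b$ by the same Newton-inequality bookkeeping as in Lemma \ref{lem: ind}, proving that $\sigma_k(\lambda_{m,n}(x,y(x)))$ stays bounded below by a positive constant on $[0,T]$. This is the analytic counterpart of the geometric remark after Theorem \ref{thm: main} that only $P_0$ (at $x=0$) and $D_\infty$ (at $x=b$) can obstruct the numerical condition, so no degeneration occurs in the interior $0<x<b$. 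With this non-degeneracy at $x=T$, the ODE stays regular there, the openness step applies, and the resulting extension past $T$ contradicts the maximality of $T$; hence $T=b$.
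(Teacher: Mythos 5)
Your openness step is sound, and it is the same mechanism the paper uses: along a solution with $\lambda_{m,n}(x,y(x))\in\Gamma_k$ one has $(\sigma_{k-1}-\mu\sigma_{\ell-1})(\check{\lambda}_{m,n})>0$, so $D_yF>0$ and the zero set is locally a graph. (Two small repairs there: strict monotonicity of the normalized quotient in Proposition \ref{prop: mono} only yields the non-strict derivative inequality, so the strict inequality $\sigma_{k-1}(\check{\lambda})\sigma_\ell(\lambda)>\sigma_k(\lambda)\sigma_{\ell-1}(\check{\lambda})$ should be extracted from the Newton inequality as in Lemma \ref{lem: ind}; and your degree count for the $C^0$ bound fails when $k=m+n+1$, since $\check{\lambda}_{m,n}$ has only $m+n$ entries and $\sigma_k(\check{\lambda}_{m,n})\equiv0$ --- the paper instead bounds $y$ cleanly by noting that $F(x,\cdot)$ is a polynomial whose coefficients are bounded on $(0,b)$ and whose leading coefficient does not vanish there, so its roots are uniformly bounded.) The genuine gap is the closedness step, which you explicitly defer (``this is where I expect the real work''), and the route you sketch cannot close it. First, since $\check{\lambda}_{m,n}(x,y(x))$ depends only on $(x,y)$ and $y$ is bounded, the only way the solution can degenerate as $x\to T$ is through the last eigenvalue $y'$, which may tend to $+\infty$; your formulation ``rule out that the limiting eigenvalue vector lies on $\partial\Gamma_k$'' tacitly assumes that $\lambda_{m,n}$ converges at all, which is precisely what must be proved. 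Second, the a priori estimate you propose --- $\sigma_k(\lambda_{m,n}(x,y(x)))\ge c>0$ on $[0,T]$ --- is vacuous: along the solution $\sigma_k(\lambda_{m,n})$ is identically a fixed positive multiple of $\sigma_\ell(\lambda_{m,n})$ (for $\ell=0$ it is literally a positive constant), while in the dangerous scenario $y'\to+\infty$ the equation only forces $(\sigma_{k-1}-\mu\sigma_{\ell-1})(\check{\lambda}_{m,n})\to0$, with $\sigma_\ell(\lambda_{m,n})=y'\sigma_{\ell-1}(\check{\lambda}_{m,n})+\sigma_\ell(\check{\lambda}_{m,n})$ staying bounded away from zero (it can even blow up). So your lower bound on $\sigma_k$ holds trivially in exactly the situation it is supposed to exclude, and the contradiction never materializes; the appeal to ``only $P_0$ and $D_\infty$ can obstruct'' is likewise circular, since that remark is a consequence of this proposition, not an input.

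What actually closes the argument in the paper is a direct exclusion of $y'(x_i)\to\pm\infty$ along a sequence $x_i\to T$. The case $y'\to-\infty$ is impossible because $\lambda_{m,n}\in\Gamma_k\subset\Gamma_1$ and the boundedness of $y$ give a lower bound on $y'$. If $y'(x_i)\to+\infty$, then (using $T>0$, so $x_i$ is bounded away from $0$) the derivatives $\bigl(y/x\bigr)'$ and $\bigl((p+y)/(1+x)\bigr)'$ blow up to $+\infty$ at $x_i$, so every entry of $\lambda_{m,n}(x,y(x))$ is strictly increasing near $x_i$; by the strict monotonicity of $\sigma_k/\sigma_\ell$ in $\Gamma_k$ (Proposition \ref{prop: mono}) this forces the ratio to increase, contradicting the identity $(\sigma_k-\mu\sigma_\ell)(\lambda_{m,n}(x,y(x)))\equiv0$ along the solution. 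This monotonicity contradiction --- which controls $y'$ itself rather than $\sigma_k$ --- is the missing idea in your proposal; once it secures $(D_yF)(T,y(T))>0$, your implicit-function-theorem continuation applies verbatim and yields $T=b$.
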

\begin{proof}
    By Proposition \ref{prop: near}, we know that $T>0$. Suppose $T<b$ and take a monotone sequence $\{x_i\}\subset [0,T)$ such that $x_i\to T$. Note that $F(x,y)=0$ is a polynomial of $y$ with coefficients in $\mathbb{Z}[x]$, that these coefficients are bounded as $0<x<b$, and that the coefficient of the highest degree is not zero for $0<x<b$. Therefore, we have a uniform bound of $\{y(x_i)\}$. After taking a subsequence, we can assume that $y(x_i)\to y(T)$ for some $y(T)$. It suffices to prove $(D_yF)(T,y(T))>0$. Suppose not. Then, either $y'(x_i)\to \pm\infty$ (the case $-\infty$ can be eliminated since $\lambda_{m,n}(x,y(x))\in\Gamma_k\subset \Gamma_1$ and $y(x)$ is bounded). Each case implies that 
    $$\left(\frac{y(x)}{x}\right)'\Big|_{x=x_i}, \ \left(\frac{1+y(x)}{1+x}\right)'\Big|_{x=x_i}\to \pm\infty \quad \text{as $i\to \infty$}.$$
    However, this is impossible since $(\sigma_k-\mu\sigma_\ell)(\lambda_{m,n}(x,y(x))=0$ and $\sigma_k/\sigma_\ell$ is strictly increasing in $\Gamma_k$. Therefore, we have $T=b$.
\end{proof}

Lastly, we prove that the function $y_0:=y(x)\in C^\infty([0,b])$ satisfies the boundary condition $y_0(b)=q$. When $\ell=0$, this equality follows from the numerical positivity for the subvarieties coming from cutting by $\eta=c_1(\mathcal{O}(1))$. When $\ell>0$ and there exists a smooth $\alpha$-$(\omega,k)$-subharmonic function, the equality follows from the numerical positivity for the subvariety $D_\infty$.

\begin{prop}
    If the pair $([\alpha],[\omega])$ satisfies the numerical criterion in Conjecture \ref{conj}, then we have $y_0(b)=q$.
\end{prop}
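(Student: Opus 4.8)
The plan is to pin down $y_0(b)$ by comparing it with $q$ inside the single-variable polynomial $y\mapsto F(b,y)$, of which both are roots: $F(b,y_0(b))=0$ since $F(x,y_0(x))\equiv0$ on $[0,b]$, and $F(b,q)=0$ by Proposition \ref{prop: boundary}. All of the analysis lives on the affine line $y\mapsto\check\lambda_{m,n}(b,y)=(\tfrac{y}{b},\dots,\tfrac{y}{b},\tfrac{p+y}{1+b},\dots,\tfrac{p+y}{1+b})$, whose direction vector has all positive entries. Since $\Gamma_{k-1}$ is an open convex cone, the set $R:=\{y\mid\check\lambda_{m,n}(b,y)\in\Gamma_{k-1}\}$ is a ray $(y_*,\infty)$, and I would show that $y_0(b)$ and $q$ both lie in $\overline R$ and are forced to coincide there.

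First I would collect what the solution gives for free. Along the induction producing $y_0$ one keeps $\check\lambda_{m,n}(x,y_0(x))\in\Gamma_{k-1}$ for $x\in(0,b)$, so by continuity $\check\lambda_{m,n}(b,y_0(b))\in\overline\Gamma_{k-1}$, i.e.\ $y_0(b)\ge y_*$. Next, writing $\phi(y):=(\sigma_{k-1}-\mu\sigma_{\ell-1})(\check\lambda_{m,n}(b,y))$ and recalling $D_yF(x,y)=x^m(1+x)^n(\sigma_{k-1}-\mu\sigma_{\ell-1})(\check\lambda_{m,n}(x,y))$, the strict positivity $D_yF(x,y_0(x))>0$ for $x\in(0,b)$ established in Proposition \ref{prop: interval} passes to the limit and gives $\phi(y_0(b))\ge0$.

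Secondly I would read off from the numerical condition for $D_\infty$ that $q$ sits strictly inside the admissible ray and strictly on the increasing side, namely $\check\lambda_{m,n}(b,q)\in\Gamma_{k-1}$ and $\phi(q)>0$. The point is that $\check\lambda_{m,n}(b,q)$ is precisely the (constant) eigenvalue vector of $\alpha|_{D_\infty}$ with respect to $\omega|_{D_\infty}$ on the $\mathbb{P}^m$-bundle $D_\infty\to M$, so for these constant eigenvalues one has $\int_{D_\infty}\alpha^a\wedge\omega^{(m+n)-a}=\tfrac{\sigma_a(\check\lambda_{m,n}(b,q))}{\binom{m+n}{a}}\int_{D_\infty}\omega^{m+n}$, and likewise for subvarieties of $D_\infty$. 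When $\ell=0$, the numerical positivity of the subvarieties contained in $D_\infty$ (part of $[\alpha]\in\mathcal{P}_{k,\omega}$) becomes $\sigma_a(\check\lambda_{m,n}(b,q))>0$ for $0\le a\le k-1$, that is $\check\lambda_{m,n}(b,q)\in\Gamma_{k-1}$, whence $\phi(q)=\sigma_{k-1}(\check\lambda_{m,n}(b,q))>0$. When $\ell\ge1$, the hypothesis $[\alpha]\in\mathcal{K}_{k,\omega}\subseteq\mathcal{P}_{k,\omega}$ already gives $\check\lambda_{m,n}(b,q)\in\Gamma_{k-1}$, while the inequality of Conjecture \ref{conj} \eqref{item: conj2} applied to $V=D_\infty$ reads $k\int_{D_\infty}\alpha^{k-1}\wedge\omega^{N-k}-c\ell\int_{D_\infty}\alpha^{\ell-1}\wedge\omega^{N-\ell}>0$ with $N=m+n+1$; using $\tfrac{k}{\binom{m+n}{k-1}}=\tfrac{c\ell}{\mu\binom{m+n}{\ell-1}}$ this is a positive multiple of $\phi(q)$, so again $\phi(q)>0$.

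Finally I would close on the ray $R=(y_*,\infty)$. There $\check\lambda_{m,n}(b,y)\in\Gamma_{k-1}$, and by the strict monotonicity of $\sigma_{k-1}/\sigma_{\ell-1}$ from Proposition \ref{prop: mono} the quotient crosses $\mu$ at most once, so $\phi$ is positive near $+\infty$ and changes sign at most once on $R$; hence $F(b,\cdot)$ is strictly increasing on a terminal subray $(y_{\min},\infty)\subseteq R$ (with $y_{\min}=y_*$ when $\ell=0$). Since $\phi(q)>0$ the root $q$ lies in $(y_{\min},\infty)$, where $F(b,\cdot)$ is injective, so $q$ is its unique root there. On the other hand $\phi(y_0(b))\ge0$ together with $y_0(b)\ge y_*$ forces $y_0(b)\in[y_{\min},\infty)$ (for $\ell\ge1$ the interval $(y_*,y_{\min})$ and the endpoint $y_*$ carry $\phi<0$; for $\ell=0$ one has $y_{\min}=y_*$). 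Being a root of $F(b,\cdot)$, the point $y_0(b)$ must then equal $q$, since $F(b,y_{\min})<F(b,q)=0$ rules out $y_0(b)=y_{\min}$. I expect the genuine difficulty to be the reduction carried out in the third step: justifying that numerical positivity over all subvarieties of $D_\infty$ collapses to membership of the single constant vector $\check\lambda_{m,n}(b,q)$ in $\Gamma_{k-1}$, and verifying the constant-matching that makes the Conjecture \ref{conj} \eqref{item: conj2} inequality on $D_\infty$ literally proportional to $\phi(q)$.
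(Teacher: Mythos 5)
Your skeleton is essentially the paper's: both $q$ and $y_0(b)$ are roots of the one-variable polynomial $F(b,\cdot)$, the derivative is $D_yF(b,y)=b^m(1+b)^n(\sigma_{k-1}-\mu\sigma_{\ell-1})(\check{\lambda}_{m,n}(b,y))$, the numerical condition for $D_\infty$ is exactly $D_yF(b,q)>0$ (your constant matching is correct: from $k\binom{m+n+1}{k}^{-1}\cdot(m+n+1)\binom{m+n}{k-1}=k!\,(m+n+1-k)!\cdot\frac{(m+n)!}{(k-1)!(m+n+1-k)!}\binom{m+n+1}{k}^{-1}$ one gets $\mu=c\,\ell\binom{m+n}{k-1}\big/\bigl(k\binom{m+n}{\ell-1}\bigr)$, matching the paper's assertion that $D_yF(b,q)>0$ \emph{is} the $D_\infty$ condition), and coincidence of the two roots is forced by strict monotonicity of $\sigma_{k-1}/\sigma_{\ell-1}$ in $\Gamma_{k-1}$ along the positive direction. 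The ``collapse'' you flag as the genuine difficulty is handled in the paper without any pointwise constant-eigenvalue analysis on $D_\infty$: one differentiates the topological identity \eqref{eq: topological} in $q$ to get $(D^s_yF)(b,y)=C\int_{D_\infty^s}\bigl(\alpha_y^{k-s}\wedge\omega^{m+n+1-k}-c\,\alpha_y^{\ell-s}\wedge\omega^{m+n+1-\ell}\bigr)$, so the numerical condition on the cut-down subvarieties $D_\infty^s$ directly gives positivity of all Taylor coefficients $(D^s_yF)(b,q)$, $1\le s\le k-1$, with the top coefficient $(D^k_yF)(b,y)>0$ from K\"ahlerness of $\omega$ when $\ell=0$; this yields both $D_yF(b,y)>0$ for $y\ge q$ and, applied to $G^{m,n,k}$ alone in the case $\ell>0$, the membership $\check{\lambda}_{m,n}(b,q)\in\Gamma_{k-1}$ in one stroke.

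There is, however, one genuine soft spot on the solution side. You only carry the non-strict limit $\phi(y_0(b))\ge0$ and $\check{\lambda}_{m,n}(b,y_0(b))\in\overline{\Gamma}_{k-1}$, and then assert that the endpoint $y_*$ of the ray carries $\phi(y_*)<0$ when $\ell\ge1$. That last assertion is unjustified: on $\partial\Gamma_{k-1}$ one can have $\sigma_{k-1}=0$ \emph{and} $\sigma_{\ell-1}=0$ simultaneously (by the Maclaurin/Newton inequalities on $\overline{\Gamma}_{k-1}$, $\sigma_{\ell-1}(\lambda)=0$ forces $\sigma_j(\lambda)=0$ for all $\ell-1\le j\le k-1$, so such boundary points are not excluded a priori), in which case $\phi(y_*)=0$ and your argument no longer rules out $y_0(b)=y_*<y_{\min}$, where $F(b,y_*)=0$ is perfectly consistent with $F$ decreasing on $(y_*,y_{\min})$ and $F(b,y_{\min})<0$. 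The paper closes exactly this corner by importing the \emph{strict} inequality $(D_yF)(b,y_0(b))>0$ from (the proof of) Proposition \ref{prop: interval} --- if $D_yF$ degenerated at the endpoint, $y'$ would blow up, contradicting $(\sigma_k-\mu\sigma_\ell)(\lambda_{m,n}(x,y(x)))=0$ and the monotonicity of $\sigma_k/\sigma_\ell$ --- and then upgrading to $\check{\lambda}_{m,n}(b,y_0(b))\in\Gamma_{k-1}$ via the Newton inequality, which gives $D_yF(b,y)>0$ for all $y\ge y_0(b)$; combined with $D_yF(b,y)>0$ for $y\ge q$, both roots lie in a ray of strict monotonicity and must coincide. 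Your proof is repaired verbatim by replacing $\phi(y_0(b))\ge0$ with this strict statement, which is available to you since Proposition \ref{prop: interval}'s contradiction argument applies at $T=b$ just as at $T<b$.
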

\begin{proof}
     By Proposition \ref{prop: boundary}, we have $F(b,q)=0$. By Proposition \ref{prop: interval}, we have $(D_yF)(b,y_0(b))>0$. Also, by the Newton inequality, we see that $\check{\lambda}_{m,n}(b,y_0(b))\in\Gamma_{k-1}$. Thus, we have $(D_yF)(b,y)>0$ for any $y$ such that $y\ge y_0(b)$. It suffices to prove $(D_yF)(b,y)>0$ for any $y$ such that $y\ge q$. By differentiating the equality \eqref{eq: topological} with respect to $q$, we get 
    $$(D^s_yF)(b,y)=C\int_{D_\infty^s}\Bigl(\alpha_y^{k-s}\wedge\omega^{m+n+1-k}-c\alpha_y^{\ell-s}\wedge\omega^{m+n+1-\ell}\Bigr),$$
    where $\alpha_y$ is a closed real $(1,1)$-form whose cohomology class is $p[\pi^*\omega_M]+y\eta$ and $D_\infty^s$ is a subvariety associated to cutting $s$-times by $[D_\infty]=\eta$. If $\ell=0$, since $\omega$ is a K{\"a}hler form, we get $(D^k_yF)(b,y)>0$ for any $y$. The numerical condition in Conjecture \ref{conj} implies $(D^s_yF)(b,q)>0$ for an integer $s$ satisfying $1\le s\le k-1$. Hence, we see that $(D_yF)(b,y)>0$ for any $y\ge q$. If $\ell>0$, from the case $\ell=0$, we see that $\check{\lambda}_{m,n}(b,q)\in\Gamma_{k-1}$. Thus, if $(D_yF)(b,q)>0$, we see that $(D_yF)(b,y)>0$ for any $y\ge q$, since $\sigma_{k-1}/\sigma_{\ell-1}$ is strictly increasing in $\Gamma_{k-1}$. The condition $(D_yF)(b,q)>0$ is nothing but the numerical condition for $D_\infty$. 
\end{proof}

\section{Proof of Theorem \ref{thm: semiample}}\label{sec: semi}
The proof of Theorem \ref{thm: semiample} is almost the same as arguments in \cite[Section 3]{Mat}, except for a small adjustment to $\alpha$-$(\omega,k)$-subharmonicity. Let $X$ be an $n$-dimensional compact K{\"a}hler manifold.

\begin{proof}[Proof of Theorem \ref{thm: semiample}]
    Since $L$ is a semiample line bundle, there exists the associated Iitaka fibration $\Phi: X\to Y$.
    Since $L$ is a trivial line bundle on a generic fiber $\Phi$, the numerical condition in Conjecture \ref{conj} \eqref{item: conj} implies that the generic fiber has a dimension less than $n-k$. Let $\omega_Y\in c_1(L_Y)$, where $L_Y$ is an ample line bundle on $Y$ such that $\Phi^*L_Y=L$. As in \cite[Definition 3.2]{Mat}, Let $B$ be the degenerated locus of $\alpha:=\Phi^*\omega_Y$, i.e.,
    \begin{equation*}
        B:=\{p\in X \mid\alpha\text{ has at least $(n-k+1)$ zero eigenvalues at $p$}\}.
    \end{equation*}
    According to \cite[Proposition 3.3]{Mat}, the degenerated locus $B$ is a closed analytic set on $X$. According to \cite[Proposition 3.6]{Mat}, there exist sets $D_s (0\le s \le \dim B)$ with the following properties:
    \begin{enumerate}
        \item $D_s$ is an analytic (possibly, not closed, not irreducible, singular) set on $X$.
        \item \label{item: filt} $D_{\dim B}:=B\supset D_{\dim B-1}\supset\dots\supset D_1\supset D_0$.
        \item $\dim D_s=s$ for $s=0,1,2,\dots,\dim B$.
        \item $D_s\setminus D_{s-1}$ for $s=1,2,\dots,\dim B$ is a disjoint union of non-singular analytic sets.
        \item\label{item: alpha} For an irreducible component $W$ of $D_s\setminus D_{s-1}$ with $\dim W>n-k$, the $(1,1)$-form $\alpha\mid_W$ has $(\dim W-n +k)$ positive eigenvalues.
    \end{enumerate}
    Accoridng to \cite[Proposition 3.7]{Mat}, for $s=0,1,\dots, \dim B$, there exists a function $\varphi_s\in C^2(X,\mathbb{R})$ with the following properties:
    \begin{enumerate}
        \item\label{item: posi}  Let $W$ be an irreducible component of $D_s\setminus D_{s-1}$. Then the Levi form $\delb \varphi_s$ has $(n-\dim W)$ positive eigenvalues in the normal direction of $W$.
        \item\label{item: semi} The form $\delb \varphi_s$ is semi-positive at every point in $\overline{D_s}$.
    \end{enumerate}
    Fix a K{\"a}hler form $\omega$ on $X$.
    We claim that for any $x\in X$ there exists a constant $\varepsilon(x)>0$ such that for any $0<\varepsilon<\varepsilon(x)$, the form 
    \begin{equation}\label{eq: form}
        \alpha+\delb\left(\sum_{a=0}^{\dim B}\varepsilon^{\sum_{b=0}^a k^b}\varphi_{\dim B-a}\right)
    \end{equation}
    is $(\omega,k)$-subharmonic at $x$, i.e., the eigenvalue vector of this form with respect to $\omega$ at $x$ is in $\Gamma_k$. If this is true, from the compactness of $X$, the theorem is proved. Now we prove the claim. Suppose $x\in X\setminus B$. Then, by the definition of $B$, the form $\alpha$ has $k$ positive eigenvalues and $(\dim X-k)$ semi-positive eigenvalues at $x$. Thus, we have $\sigma_i(\lambda(\alpha\cdot\omega^{-1}))>0$ for $i=1,2,\dots,k$, namely $\lambda(\alpha\cdot\omega^{-1})\in\Gamma_k$, where $\lambda(\alpha\cdot\omega^{-1})$ is the eigenvalue vector of $\alpha$ with respect to $\omega$. Since form \eqref{eq: form} converges to $\alpha$ as $\varepsilon\to0$, there exists a constant $\varepsilon(x)$ such that if $\varepsilon<\varepsilon(x)$ then form \eqref{eq: form} is $(\omega,k)$-subharmonic at $x$. Next, suppose $x\in D_s\setminus D_{s-1}$ for $s=0,1,\dots,\dim B$. 
    By property \eqref{item: alpha} of $D_s$ and properties \eqref{item: posi} and \eqref{item: semi} of $\varphi_s$, the form
    \begin{equation}\label{eq: form2}
        \alpha+\delb\left(\varepsilon^{\sum_{b=0}^{\dim B-s} k^b}\varphi_s\right)
    \end{equation}
    has $k$ positive eigenvalues and $\dim X-k$ semipositive eigenvalues for any $\varepsilon>0$. Thus, its eigenvalue vector is in $\Gamma_k$. 
    On the other hand, take a coordinate neighborhood around $x$ such that $\alpha=\sum c_i \sqrt{-1}dz^i\wedge d\bar{z}^i$ at $x$, where $c_i$'s are one or zero, and $\delb\varphi_s=\sum \lambda_i \sqrt{-1}dz^i\wedge d\bar{z}^i$ at $x$. By property \eqref{item: filt} of $\{D_s\}$ and property \eqref{item: semi} of $\{\varphi_s\}$, the $i$-th eigenvalue of form \eqref{eq: form} can be estimated from below by
    $$\tilde{\lambda}_i:=c_i+\varepsilon^{\sum_{b=0}^{\dim B-s}k^b}\lambda_i-\varepsilon^{\sum_{b=0}^{\dim B-s+1}k^b}C,$$
    for some $C>0$ at $x$. For $j=1,2,\dots,k$, we have
    \begin{align*}
    \sigma_j\bigl((\tilde{\lambda}_i)_i\bigr)=\sigma_j\bigl((c_i+\varepsilon^{\sum_{b=0}^{\dim B-s}k^b}\lambda_i)_i\bigr)+O(\varepsilon^{\sum_{b=0}^{\dim B-s+1}k^b}).
    \end{align*}
    Note that the first term of the right-hand side has at most $(\varepsilon^{\sum_{b=1}^{\dim B-s+1}k^b})$-order since $j\le k$ and is positive since the eigenvalue vector of form \eqref{eq: form2} is in $\Gamma_k$. Therefore, by using the monotonicity of $\sigma_j$ in $\Gamma_j$, we see that the eigenvalue vector of form \eqref{eq: form} is in $\Gamma_k$ for sufficiently small $\varepsilon>0$. Hence, we verified the claim and proved the theorem.
\end{proof}

\appendix
\section{Relation to $q$-positivity in the sense of Andreotti-Grauert}
Let $X$ be an $n$-dimensional compact K{\"a}hler manifold. In this appendix, we make several remarks on the relation between $(\omega,k)$-subharmonicity and $(n-k)$-positivity in the sense of Andreotti-Grauert. 
Recall the definition of $q$-positivity:
\begin{defn}[$q$-positivity]
    A holomorphic line bundle $L$ on $X$ is called $q$-positive if there exists a smooth Hermitian metric $h$ whose Chern curvature $\sqrt{-1}\Theta_h$ has at least $(n-q)$ positive eigenvalues at any point on $X$ as a $(1,1)$-form.
\end{defn}
Yang \cite[Proposition 2.2]{Yang} gives the following characterization of $q$-positivity:

\begin{prop}[{\cite[Proposition 2.2]{Yang}}]\label{prop: Yang}
    The following conditions are equivalent:
    \begin{enumerate}[font=\normalfont]
        \item A holomorphic line bundle $L$ is $q$-positive.
        \item There exist a smooth hermitian metric $h$ and a smooth hermitian form $\omega$ on $X$ such that the summation of any distinct $(q+1)$ eigenvalues $($counting multiplicity$)$ of the Chern curvature $\sqrt{-1}\Theta_h$ of $h$ is positive at any point of $X$ $($with respect to $\omega)$.
    \end{enumerate}
\end{prop}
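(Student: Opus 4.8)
The plan is to prove the two implications separately, keeping the Hermitian metric $h$ fixed throughout and exploiting only the freedom to choose $\omega$; note that in condition~(2) the form $\omega$ is only required to be a positive Hermitian form, not Kähler, so no closedness will be demanded of the construction.

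The implication (2)~$\Rightarrow$~(1) is immediate and requires no change of data. The number of positive, resp.\ non-positive, eigenvalues of the Hermitian form $\sqrt{-1}\Theta_h$ is the signature of a Hermitian form, hence independent of the auxiliary positive form $\omega$ used to measure them. If some $\omega$ makes the sum of any distinct $(q+1)$ eigenvalues positive, then $\sqrt{-1}\Theta_h$ cannot possess $(q+1)$ non-positive eigenvalues, since those would furnish a $(q+1)$-tuple of non-positive sum. Thus at most $q$ eigenvalues are non-positive, so at least $(n-q)$ are positive, and the same $h$ witnesses $q$-positivity.

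For (1)~$\Rightarrow$~(2) I would fix the metric $h$ supplied by $q$-positivity and build $\omega$ by functional calculus. Write $\Theta:=\sqrt{-1}\Theta_h$, fix any background Hermitian form $\omega_0$, and let $A$ be the $\omega_0$-self-adjoint endomorphism field representing $\Theta$, so that the eigenvalues of $\Theta$ with respect to $\omega_0$ are the pointwise eigenvalues $\mu_1\ge\dots\ge\mu_n$ of $A$. By hypothesis $\mu_{n-q}>0$ everywhere, so by compactness of $X$ there is $\delta>0$ with $\mu_{n-q}\ge\delta$ throughout. For a parameter $t>0$ I would set $\omega_t$ to be the positive $(1,1)$-form represented by $e^{-tA}$; since $A$ is a smooth field of self-adjoint endomorphisms, $e^{-tA}=\sum_{j\ge0}(-tA)^j/j!$ is smooth and positive definite, so $\omega_t$ is a genuine smooth Hermitian form. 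A short computation gives $\omega_t^{-1}\Theta=e^{tA}A$, so the eigenvalues of $\Theta$ with respect to $\omega_t$ are exactly $\mu_i e^{t\mu_i}$.

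It then remains to verify the numerical inequality. Because at least $(n-q)$ of the $\mu_i$ are $\ge\delta$, every choice of $(q+1)$ indices meets $\{1,\dots,n-q\}$ and hence includes one index with $\mu_i\ge\delta$; as $s\mapsto se^{ts}$ is increasing on $(0,\infty)$, that term is at least $\delta e^{t\delta}$. Each remaining term (there are at most $q$) is bounded below by the global minimum of $s\mapsto se^{ts}$, namely $-1/(te)$ attained at $s=-1/t$. Hence the sum of any distinct $(q+1)$ eigenvalues of $\Theta$ with respect to $\omega_t$ is at least $\delta e^{t\delta}-q/(te)$, which is positive once $t$ is large; taking $\omega:=\omega_t$ for such $t$ finishes the argument. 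The one point needing care—and the reason for not simply rescaling $\omega_0$ along the positive eigendirections—is that the positive eigenspaces of $\Theta$ need not form a smooth subbundle where eigenvalues collide. The functional-calculus construction circumvents exactly this obstacle: $e^{-tA}$ is smooth regardless of multiplicities and automatically diagonalizes simultaneously with $\Theta$, reducing the whole matter to the behaviour of the scalar function $s\mapsto se^{ts}$.
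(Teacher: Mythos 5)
Your proposal is correct, and it is worth noting at the outset that the paper contains no proof of this statement to compare against: Proposition \ref{prop: Yang} is quoted from Yang [Yang, Proposition 2.2] as a known result, so yours is a self-contained argument where the paper gives only a citation. The easy direction is handled exactly right: by Sylvester's law of inertia the number of positive eigenvalues of the Hermitian form $\sqrt{-1}\Theta_h$ is independent of the auxiliary $\omega$, and the sum condition forbids $(q+1)$ non-positive eigenvalues, so the same $h$ witnesses $q$-positivity. For the converse, your functional-calculus construction checks out in every detail: $e^{-tA}$ is smooth (locally uniform convergence of the power series in all $C^k$ norms) and positive definite, it commutes and is simultaneously $\omega_0$-unitarily diagonalizable with $A$, so the generalized eigenvalues of $\Theta$ against $\omega_t$ are indeed $\mu_i e^{t\mu_i}$; the two scalar bounds $s e^{ts}\ge -1/(te)$ for all $s$ and $s e^{ts}\ge \delta e^{t\delta}$ for $s\ge\delta$, combined with the pigeonhole observation that at most $q$ indices satisfy $\mu_i<\delta$, give the lower bound $\delta e^{t\delta}-q/(te)$ for every $(q+1)$-fold sum, and this is uniform over $X$ because $\delta$ is a global minimum of the continuous function $\mu_{n-q}$ on a compact manifold, so a single large $t$ works everywhere. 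Your approach is genuinely different in flavor from the local-patching technique that this paper itself employs for the companion (unnamed) proposition in the appendix, where the author chooses local Hermitian forms $\Omega_p$ adapted to individual eigendirections, glues them via a partition of unity applied to $\omega^{-1}$, and invokes concavity of $\sigma_i^{1/i}$ in $\Gamma_i$ to preserve positivity under the gluing. Your global exponential twist dispenses with partitions of unity and any concavity lemma entirely, and—as you correctly flag—it also circumvents the non-smoothness of eigenprojections at points where eigenvalues collide, which is precisely the obstruction that forces the patching argument to work only with carefully chosen local data. The trade-off is that the patching method generalizes to the $\sigma_k$-positivity statement actually needed in the appendix (where the relevant functional is an elementary symmetric function rather than a minimum of eigenvalue sums), whereas your argument is tailored to the linear functional in condition (2); within the scope of the stated proposition, however, your proof is complete and arguably more economical.
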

The latter condition is called ``uniform $q$-positivity'' in \cite[Definition 2.1]{Yang}. It is also related to ``$(q+1)$-plurisubharmonicity'' in the sense of Harvey-Lawson's geometric potential theory \cite[p434--435]{HL}.

Since $\sigma_i$ is striclty increasing in $\Gamma_i$ as in Proposition \ref{prop: mono}, if the Chern curvature $\sqrt{-1}\Theta_h$ is $(\omega,k)$-subharmonic for some smooth hermitian form $\omega$, the summation of any distinct $(n-k+1)$ eigenvalues (conting multiplicity) of $\sqrt{-1}\Theta_h$ is positive at any point of $X$ with respect to $\omega$. Hence, $(\omega,k)$-subharmonicity implies $(n-k)$-positivity. 

On the other hand, we can prove a converse implication:
\begin{prop}
    If $L$ is $q$-positive, there exist a smooth hermitian metric $h$ of $L$ and a smooth hermitian form $\omega$ on $X$ such that the Chern curvature form $\sqrt{-1}\Theta_h$ is $(\omega,n-q)$-subharmonic.
\end{prop}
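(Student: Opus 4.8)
The plan is to keep the $q$-positive metric $h$ fixed and to build $\omega$ by a functional-calculus deformation of a fixed background form, thereby dilating the positive directions of the curvature until they dominate. Write $\Theta:=\sqrt{-1}\Theta_h$. Since the number of positive eigenvalues of a Hermitian $(1,1)$-form is independent of the reference metric, $q$-positivity of $L$ means exactly that $\Theta$ has at least $n-q$ positive eigenvalues at every point. Fix any smooth Hermitian form $\omega_0$ and let $B:=\omega_0^{-1}\Theta$ be the associated field of $\omega_0$-self-adjoint endomorphisms, whose eigenvalues are the eigenvalues of $\Theta$ with respect to $\omega_0$. By continuity of the eigenvalues and compactness of $X$ there are constants $0<\delta\le M$ such that at every point at least $n-q$ eigenvalues of $B$ are $\ge\delta$ and all eigenvalues lie in $[-M,M]$.

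Next I would choose a single smooth function $g\colon\mathbb{R}\to\mathbb{R}_{>0}$, depending on a large parameter $R$, with $g(t)=1/R$ for $t\ge\delta$ and $g(t)=K$ for $t\le0$, where $K:=M/M_0$ for a fixed $M_0>0$, interpolated smoothly and positively in between. Setting $\omega:=\omega_0(g(B)\cdot,\cdot)$ produces a smooth positive Hermitian form: positivity of $g$ makes $g(B)$ a positive self-adjoint field, and — this is the point that sidesteps the non-smoothness of the positive eigenspace — for a globally smooth $g$ the field $g(B)$ is itself smooth, even across eigenvalue crossings and zeros. Since $B$ and $g(B)$ commute, the eigenvalues of $\Theta$ with respect to $\omega$ are exactly $\mu_j/g(\mu_j)$, where $\mu_1,\dots,\mu_n$ are the eigenvalues of $B$. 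Thus the at least $n-q$ eigenvalues with $\mu_j\ge\delta$ become $R\mu_j\in[R\delta,RM]$, while every remaining eigenvalue satisfies $\mu_j/g(\mu_j)\ge-M/K=-M_0$; in short, all eigenvalues are $\ge-M_0$ and at least $n-q$ of them are $\ge R\delta$.

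It then remains to prove the elementary ``dominance'' fact: there is $R_0$ so that, for $R\ge R_0$, any vector $\lambda\in\mathbb{R}^n$ all of whose entries are $\ge-M_0$, at least $n-q$ of which lie in $[R\delta,RM]$, belongs to $\Gamma_{n-q}$. I would prove this by replacing the at most $q$ negative entries of $\lambda$ by $0$ to get $\lambda^+\ge0$ with $\sigma_i(\lambda^+)\ge\binom{n-q}{i}(R\delta)^i$ for $1\le i\le n-q$, then expanding $\sigma_i(\lambda)=\sigma_i(\lambda^+-\nu)$ in the nonnegative perturbation $\nu$, which is supported on at most $q$ coordinates with entries $\le M_0$. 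Every correction term contains at least one factor $\nu_j$, hence at most $i-1$ large factors, each bounded by $RM$; since the large entries are mutually comparable ($RM\le(M/\delta)\cdot R\delta$), each correction is of order $O(R^{i-1})$ while the main term is of order $R^{i}$. Therefore $\sigma_i(\lambda)>0$ for all $1\le i\le n-q$ once $R$ is large, uniformly over $X$ because $\delta,M,M_0$ are uniform constants. Applying this to the eigenvalue vector of $\Theta$ with respect to $\omega$ shows that $\Theta$ is $(\omega,n-q)$-subharmonic, completing the proof. The one genuine obstacle — the failure of the positive eigenspace to vary smoothly at eigenvalue crossings — is precisely what the smooth functional calculus $g(B)$ circumvents; the remaining ingredients are the uniform spectral bound from compactness and the dominance estimate above.
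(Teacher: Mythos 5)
Your proof is correct, and it takes a genuinely different route from the paper's. The paper argues by downward induction on $q$: the base case $q=n-1$ identifies Yang's uniform positivity with $(\omega,1)$-subharmonicity; in the inductive step it uses the identity $\sigma_{n-q}(\lambda)=\lambda_1\sigma_{n-q-1}(\check{\lambda})+\sigma_{n-q}(\check{\lambda})$ to achieve $\sigma_{n-q}>0$ locally by shrinking the reference metric in the top eigendirection, and then glues the local Hermitian forms by a partition of unity applied to the inverse metrics, invoking the Gårding-type concavity of $\sigma_{n-q}^{1/(n-q)}$ on $\Gamma_{n-q}$ to preserve positivity under the gluing. You instead make a single global deformation $\omega:=\omega_0(g(B)\cdot,\cdot)$ via smooth functional calculus, so that the eigenvalues become $\mu_j/g(\mu_j)$: at least $n-q$ of them are $\geq R\delta$ while all are $\geq -M_0$, and your elementary dominance estimate $\sigma_i(\lambda)\geq\binom{n-q}{i}(R\delta)^i-O(R^{i-1})$ places the eigenvalue vector in $\Gamma_{n-q}$ for $R$ large, uniformly by compactness. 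This dispenses with both the induction and the concavity/partition-of-unity gluing; the price is that you need two standard facts that deserve explicit justification: the smoothness of $A\mapsto g(A)$ on Hermitian endomorphism fields for merely $C^\infty$ (non-analytic) $g$ — true, e.g.\ via divided differences, and genuinely needed since a function constant on two rays cannot be real-analytic — and the uniform spectral gap $\delta$, which follows since the $(n-q)$-th largest eigenvalue of $B$ is continuous and everywhere positive on compact $X$. One micro-point: choose the interpolation $g$ monotone decreasing on $[0,\delta]$ so that $g\geq 1/R$ there; this keeps the ``middle'' eigenvalues $\mu_j\in(0,\delta)$ bounded by $R\delta\leq RM$, which your $O(R^{i-1})$ correction bound implicitly uses. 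In exchange, your construction is more quantitative than the paper's: it produces $n-q$ eigenvalues that are arbitrarily large against a fixed lower bound $-M_0$ on the rest, a uniform domination from which membership in $\Gamma_{n-q}$ (and indeed uniform positivity of all $\sigma_i$, $i\leq n-q$) is immediate.
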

\begin{proof}
    We prove the statement by induction. Suppose $q=n-1$. Then, the uniform $q$-positivity is nothing but the $(\omega,1)$-subharmonicity. So the statement holds. Suppose now that the statement holds for any integers strictly larger than $q$. Since the $q$-positivity implies $(q+1)$-positivity, by the assumption of induction, there exist a smooth hermitian metric $h$ and a smooth hermitian form $\tilde{\omega}$ on $X$ such that the Chern curvature $\sqrt{-1}\Theta_h$ is $(\tilde{\omega},n-q-1)$-subharmonic.
    Let us fix a point $p\in X$. Take a coordinate neighborhood $U_p$ of $p$ such that $\tilde{\omega}(p)=\sum\sqrt{-1}dz^i\wedge d\bar{z}^i$ and $R(p)=\sum\lambda_i\sqrt{-1}dz^i\wedge d\bar{z}^i$, where $\lambda_1\ge\lambda_2\ge\dots\ge\lambda_n$. Note that
    \begin{equation*}
        \sigma_{n-q}(\lambda)=\lambda_1\sigma_{n-q-1}(\check{\lambda})+\sigma_{n-q}(\check{\lambda}),
    \end{equation*}
    where $\lambda=(\lambda_1,\lambda_2,\dots,\lambda_n)$ and $\check{\lambda}=(\lambda_2,\dots,\lambda_n)$. Since $\sigma_{n-q-1}(\check{\lambda})>0$ by the assumption of induction, taking a hermitian form $\Omega_p$ on $U_p$ whose corresponding metric measures the eigenvector for $\lambda_1$ small enough at $p$, we can get $\sigma_{n-q}(\tilde{\lambda})>0$ at $p$, where $\tilde{\lambda}$ is the eigenvalue vector of $R\Omega^{-1}_p$. By taking a sufficiently smaller neighborhood $W_p$ than $U_p$, we can get $\sigma_{n-q}(\tilde{\lambda})>0$ on $W_p$. By compactness, there exist a finite number of points $\{p_i\}_{i=1}^N$ such that $\{W_{p_i}\}$ is an open cover of $X$. Denote a partition of unity subordinate to $\{W_{p_i}\}$ by $\{f_i\}$. Define the hermitian form $\omega$ by the equation
    $$\omega^{-1}=\sum_i f_i\Omega^{-1}_{p_i}.$$
    Then, by the convexity of $(\sigma_i)^{1/i}$ in $\Gamma_i$ (see \cite[Corollary 2.4]{Spr}), we get
    \begin{align*}
        \Bigl(\sigma_{n-q}(R\omega^{-1})\Bigr)^{\frac{1}{n-q}}
        =&\left(\sigma_{n-q}(\sum_i f_i R\Omega^{-1}_{p_i}))\right)^{\frac{1}{n-q}}\\
        \ge&\sum_i f_i \left(\sigma_{n-q}(R\Omega^{-1}_{p_i})\right)^{\frac{1}{n-q}}>0.\qedhere
    \end{align*}
\end{proof}

Finally, we remark that a stronger implication holds for semiample line bundles. Recall that we only needed the generic fiber of the Iitaka fibration associated with a semiample line bundle $L$ to have a dimension less than $n-k$ in the proof of Theorem \ref{thm: semiample}. \cite[Theorem 1.4]{Mat} claims that this condition is equivalent to $(n-k)$-positivity of $L$. In conclusion, for any fixed hermitian form $\omega$, the $(n-k)$-positivity of a semiample line bundle $L$ implies there exists a smooth hermitian metric $h$ whose Chern curvature form is $(\omega,k)$-subharmonic.


\begin{thebibliography}{99}
  \bibitem[CNS85]{CNS}
  L. Caffarelli, L. Nirenberg, and J. Spruck, 
  \textit{The Dirichlet problem for nonlinear second-order elliptic equations. III. Functions of the eigenvalues of the Hessian,} 
  Acta Math. \textbf{155} (1985), no. 3-4, 261--301.
  \bibitem[Cal82]{Cal}
  E. Calabi, 
  \textit{Extremal Kähler metrics,} 
  Seminar on Differential Geometry, pp. 259--290, Ann. of Math. Stud., No. 102, Princeton Univ. Press, Princeton, NJ, 1982. 
  \bibitem[Chen21]{GChen}
  G. Chen,
  \textit{The J-equation and the supercritical deformed Hermitian-Yang-Mills equation,}
  Invent. Math. \textbf{225} (2021), no. 2, 529--602.
  \bibitem[DMS23]{DMS}
  V. Datar, R. Mete, and J. Song,
  \textit{Minimal slopes and bubbling for complex Hessian equations,}
  available at arXiv:arXiv:2312.03370v1.
  \bibitem[DP21]{Datar-Pingali}
   V. V. Datar and V. P. Pingali,
   \textit{A numerical criterion for generalised Monge-Ampère equations on projective manifolds,}
   Geom. Funct. Anal. \textbf{31} (2021), no. 4, 767--814.
  \bibitem[DP04]{DP}
  J.-P. Demailly and M. Paun, 
  \textit{Numerical characterization of the K{\"a}hler cone of a compact K{\"a}hler manifold,} 
  Ann. of Math. (2) \textbf{159} (2004), no. 3, 1247--1274. 
  \bibitem[DK17]{DK}
  S. Dinew and S. Kołodziej, 
  \textit{Liouville and Calabi-Yau type theorems for complex Hessian equations,} 
  Amer. J. Math. \textbf{139} (2017), no. 2, 403--415. 
  \bibitem[FL13]{FL}
  H. Fang and M. Lai, 
  \textit{Convergence of general inverse $\sigma_k$-flow on Kähler manifolds with Calabi ansatz,} 
  Trans. Amer. Math. Soc. \textbf{365} (2013), no. 12, 6543--6567. 
  \bibitem[FLM11]{FLM}
  H. Fang, M. Lai, and X. Ma, 
  \textit{On a class of fully nonlinear flows in K{\"a}hler geometry,} 
  J. Reine Angew. Math. \textbf{653} (2011), 189--220. 
  \bibitem[HL09]{HL}
  F. R. Harvey and H. B. Lawson,
  \textit{Dirichlet duality and the nonlinear Dirichlet problem,} 
  Comm. Pure Appl. Math. \textit{62} (2009), no. 3, 396--443.
  \bibitem[Jac22]{Jacob}
  A. Jacob,
  \textit{The deformed Hermitian-Yang-Mills equation and level sets of harmonic polynomials,}
  preprint, available at arXiv:2204.01875v1.
  \bibitem[JS22]{JS}
  A. Jacob and N. Sheu, 
  \textit{The deformed Hermitian-Yang-Mills equation on the blowup of $\mathbb{P}^n$,} 
  Asian J. Math. \textbf{26} (2022), no. 6, 847--864.
  \bibitem[LS15]{LS}
  M. Lejmi and G. Sz{\'e}kelyhidi, 
  \textit{The J-flow and stability,} 
  Adv. Math. \textbf{274} (2015), 404--431.
  \bibitem[Mat13]{Mat}
  S. Matsumura, 
  \textit{Asymptotic cohomology vanishing and a converse to the Andreotti-Grauert theorem on surfaces,} 
  Ann. Inst. Fourier (Grenoble) \textbf{63} (2013) no. 6, 2199--2221.
  \bibitem[Song20]{Song}
  J. Song,
  \textit{Nakai-Moishezon criterions for complex Hessian equations,}
  preprint, available at arXiv:2012.07956:v1.
  \bibitem[SW08]{SW}
  J. Song and B. Weinkove, 
  \textit{On the convergence and singularities of the $J$-flow with applications to the Mabuchi energy,} 
  Comm. Pure Appl. Math. \textbf{61} (2008), no. 2, 210--229. 
  \bibitem[Spr05]{Spr}
  J. Spruck, 
  \textit{Geometric aspects of the theory of fully nonlinear elliptic equations. Global theory of minimal surfaces,}
  283--309, Clay Math. Proc., 2, Amer. Math. Soc., Providence, RI, 2005.
  \bibitem[Sz{\'e}18]{Sze}
  G. Sz{\'e}kelyhidi, 
  \textit{Fully non-linear elliptic equations on compact Hermitian manifolds,} 
  J. Differential Geom. \textbf{109} (2018), no. 2, 337--378. 
  \bibitem[Yang19]{Yang}
  X. Yang,
  \textit{A partial converse to the Andreotti-Grauert theorem,} 
  Compos. Math. \textbf{155} (2019), no. 1, 89--99.
  \bibitem[Yau78]{Yau}
  S.-T. Yau, 
  \textit{On the Ricci curvature of a compact Kähler manifold and the complex Monge-Ampère equation. I.} 
  Comm. Pure Appl. Math. \textbf{31} (1978), no. 3, 339--411.
\end{thebibliography}
\end{document}